\newcommand{\bd}{\begin{displaymath}}
\newcommand{\be}{\begin{equation}}
\newcommand{\beq}{\begin{eqnarray}}
\newcommand{\ba}{\begin{array}}
\newcommand{\ed}{\end{displaymath}}
\newcommand{\ee}{\end{equation}}
\newcommand{\eeq}{\end{eqnarray}}
\newcommand{\ea}{\end{array}}
\newcommand{\espace}{\mbox{ }}
\newcommand{\N}{{\mathbb N}}
\newcommand{\Z}{{\mathbb Z}}
\newcommand{\eqref}[1]{(\ref{#1})}
\newtheorem{theorem}{Theorem}[section]
\newtheorem{proposition}{Proposition}[section]
\newtheorem{lemma}{Lemma}[section]
\newtheorem{corollary}{Corollary}[section]
\newenvironment{proof}[2]{\espace\\{\em Proof of #1 \ref{#2}.}}{\hfill\mbox{$\square$}}
\begin{document}

\title{Extreme paths in oriented 2D Percolation}
\author{E. D. Andjel$^{a,b}$ and L. F. Gray$^{c}$}

\maketitle

$$ \ba{l}
^a\,\mbox{\small Universit\'e d'Aix-Marseille,
 39 Rue Joliot Curie,
 13453 Marseille, France} \\
^b\, \mbox{\small Visiting IMPA, supported by CAPES and CNPq} \\
^c\, \mbox{\small School of Mathematics, University of Minnesota} 
\ea
$$

\begin{abstract}
A useful result about leftmost and rightmost paths in two dimensional bond percolation is proved.  This result was introduced without proof in \cite{G} in the context of the contact process in continuous time.  As discussed here, it also holds for several related models, including the  discrete time contact process and two dimensional site percolation.  Among the consequences are a natural monotonicity in the probability of percolation between different sites and a somewhat counter-intuitive correlation inequality.  
\end{abstract}

\textbf{Keywords:}  Oriented percolation, extreme paths, inequalities.  \\ \\

\textbf{AMS 2010 Subject Classification: }60K35.\\ \\
\section{Introduction}\label{sec:intro}

The main goal of this paper is to give complete proofs of a result originally presented in \cite{G}. The result was stated for the continuous time contact process in \cite{G}, but its proof is missing in the literature. In that paper some interesting consequences are given which  we believe justify the writing of the proof here. In the present paper we work in the context of oriented two dimensional percolation which is equivalent to a discrete time version of the contact process. In the latter part of this paper we discuss how our results apply to other models, and derive some consequences following the ideas of \cite{G}.

Two dimensional oriented bond percolation is studied in  \cite{D},  where some of its most important proprrties are proved. 
To introduce the model, let
$$\Lambda =\{(x,y): x, y \in \Z, y\geq 0, x+y \in 2\Z\} \, .$$
Then, draw oriented edges from each point $(m,n)$ in $\Lambda$ to $(m+1,n+1)$ and
to $(m-1,n+1)$ . In the percolation literature, the points in $\Lambda$ and the edges between them are often called sites and bonds, respectively.  In this paper we focus on oriented bond percolation, and thus we suppose that the edges
are open independently of each other, and that each edge is open with
probability $p\in (0,1)$.  

It is an easy matter to adapt the proof here to oriented site percolation, in which the points, rather than the edges, are open with probability $p$, independently of each other.  It turns out that our arguments also continue to work for a more general version of oriented bond percolation in which we allow dependence within each bond pair that emerges from a site.  And either by using the so-called ``graphical construction'' of continuous time interacting particle systems, or by taking limits of discrete time contact processes, our methods can also be applied to various versions of the continuous time contact process in one dimension. For more about such extensions, see our discussion in Section~\ref{sec5}.  

A path $\pi$ in $\Lambda $ is a sequence $(x_0,y_0),\dots,(x_n,y_n)$ of points in $\Lambda $ such that for all $0 \leq i< n$,  $\vert x_{i+1}-x_i\vert =1$ and $y_{i+1}-y_i=1$. The edges joining $(x_i,y_i)$ to $(x_{i+1},y_{i+1})$ for $0\leq i\leq n-1$ will be called the edges of
$\pi$. We say that a path is open if all its edges are open.  (In site percolation, a path is open if all its points are open.)

For any $n\in \N_0$ let $L_n=\{(x,n)\in \Lambda\}$. 
Let $0\leq m<n$ and let $A$ and $B$ be subsets of $L_m$ and $L_n$ respectively.
A path from $A$ to $B$ is any path starting in some point in $A$ and finishing at some point in $B$. 
A path $\pi$ from a point in $L_m$ to a point in $L_n$ will be identified with the function $\pi :[m,n] \cap \Z \rightarrow \Z$  determined by: $(\pi(j),j)$ is a point in the path $\pi$ for all $m\leq j \leq n$.


Given two paths $\pi_1$ and $\pi_2$ from $L_m$ to $L_n$ we say that $\pi_1$ is to the left of $\pi_2$ (or that $\pi_2$ is to the right of $\pi_1$) and write
$\pi_1\leq \pi_2$ (or $\pi_2\geq \pi_1$) if $\pi_1(j)\leq \pi_2(j)$  for all $m\leq j\leq n$. This creates a partial order
between paths from $L_m$ to $L_n$.  If the inequality is replaced by strict inequality, then we say that $\pi_1$ is strictly to the left of $\pi_2$ (or $\pi_2$ is strictly to the right of $\pi_1$).  

We find it useful to extend the notions ``strictly to the left'' and ``strictly to the right'' to subsets of $\Lambda$.
Let $P_1:\Lambda \rightarrow \Z$ be the projection on the first coordinate: $P_1((x,y))=x$. And in the usual fashion, extend this function to sets $\Lambda' \subset \Lambda$: $P(\Lambda') = \{P_1((x,y)) : (x,y) \in \Lambda'\}$.
For $G$  a subset of $\Lambda$, we denote by $\ell(G)$  the set of all points $(j,k) \in \Lambda$ such that
$j < \inf P_1(G \cap L_k)$, and we denote by $r(G)$ the set of points $(j,k) \in \Lambda$ such that $j > \sup P_1(G \cap L_k)$. (Here by convention, $\sup\emptyset=-\infty$ and $\inf \emptyset=\infty$.)   Thus, $\ell(G)$ ($r(G)$) is the set of all points in $\Lambda$ that are strictly to the left (right) of $G$.
If $G,G'$ are subsets of $\Lambda$, we say that $G$ is strictly to the left of $G'$ or, equivalently, $G'$ is strictly to the right of $G$, if $G \subset \ell(G')$, or equivalently if $G' \subset r(G)$.  
And if $\pi$ is a path, then we say that $\pi$ is strictly to the left of $G$ (strictly to the right of $G$) if the set of points in $\pi$ is strictly to the left of $G$ (strictly to the right of $G$).   The notation $\ell(\cdot)$ and $r(\cdot)$ introduced here also applies to paths, thinking of them as sets.  For example, a path $\pi$ is strictly to the left of a set $G$ if and only if  $G \subset r(\pi)$.  Please note that this terminology and notation are consistent with our earlier definition of one path being strictly to the left of another path, but that they now also apply to paths that do not necessarily start on the same level $L_m$ or end on the same level $L_n$.

Let $\Lambda' \subset \Lambda$.
Note that, if $A\subset L_m$ and $B\subset L_n$ are finite and there is at least one path from $A$ to $B$ contained in $\Lambda'$, then there is a unique path 
from $A$ to $B$ contained in $\Lambda'$ which is to the left of all paths from $A$ to $B$ contained in $\Lambda'$. This is called the leftmost path from $A$ to $B$.  And, if there is an open path from $A$ to $B$ contained
 in $\Lambda'$, then there is
a unique open path from $A$ to $B$ contained in $\Lambda'$ which is to the left of all open paths from $A$ to $B$ contained in $\Lambda'$. This path will be called the leftmost open 
path from $A$ to $B$ in $\Lambda'$. Similarly , we define the rightmost path and rightmost open path from $A$ to $B$ in $\Lambda'$.

Given a subset $\Lambda'$ of $\Lambda$, $0\leq m<n\in \N$ and two finite subsets $A$ and $B$ of  $L_m$ and $L_n$ respectively, $\Gamma_{\Lambda'}(A,B)$ will 
denote the set of paths from 
$A$ to $B$ contained in $\Lambda'$ . If this set is non-empty, then $\mu_{\Lambda'}(A,B)$ ($\nu_{\Lambda'}(A,B)$) will denote the conditional distribution of
 the leftmost (rightmost) open path from $A$ to $B$ contained in $\Lambda'$ given that there is at least one open path from $A$ to $B$ in $\Lambda'$.
If $m\leq j  \leq n$ and $C\subset L_j\cap \Lambda'$, $\Gamma_{\Lambda'}(A,C,B)$ will denote the set of paths from 
$A$ to $B$ going through a point in $C$ and  contained in $\Lambda'$, and if this set is non-empty, then $\mu_{\Lambda'}(A,C,B)$ ($\nu_{\Lambda'}(A,C,B)$) will
 denote the conditional distribution of the leftmost (rightmost) open path from $A$ to $B$ contained in $\Lambda'$ and going through a point
in $C$ given that there is at least one such open path.
In all these notations the subscript $\Lambda'$ will be omitted if $\Lambda'$ is the whole set $\Lambda$, and  when either  $A$,$B$
or $C$ is a singleton,
say $\{(x,y)\}$, we will often write $(x,y)$ rather than $\{(x,y)\}$. Finally, if $\gamma_1\in \Gamma (A,(x,y))$ and $\gamma_2\in \Gamma ((x,y),B)$ 
then $\gamma_1\gamma_2\in \Gamma(A,(x,y),B)$ will be the
concatenation of $\gamma_1$ and $\gamma_2$.  When the coordinates are not important, we will often denote a point $(x,y) \in \Lambda$ as a single boldface letter, such as $\mathbf z = (x,y)$, so that, for example, $\Gamma(A,(x,y),B)$ might be written as $\Gamma(A,\mathbf z, B)$.

Let $0\leq m <n$, let $A$ and  $B$ be subsets of $L_m$ and $L_n$ respectively such that $\Gamma(A,B)$ is nonempty, and  let $\mu$ and $\nu$ be probability measures on $\Gamma(A,B)$.   We say that $\mu$ is stochastically to the left
of $\nu$ and write $\mu\leq \nu$ if for any increasing function $\Phi$ on $\Gamma (A,B)$ we have \newline
$\int_{\Gamma (A,B)} \Phi(\gamma)d\mu (\gamma)\leq \int_{\Gamma (A,B)} \Phi(\gamma)d\nu (\gamma)$.

We can now state our version of the main result in \cite{G}:

\begin{theorem}\label{t1}
 Let $0\leq m < n$, let $A$ and $B$ be finite subsets of $L_m$ and $L_n$ respectively and let $G$ be a subset of $\Lambda$.
If $\Gamma_{\ell(G)}(A,B)$ is nonempty, we have
$\mu_{\ell(G)}(A,B)\leq \mu(A,B)$ and $\nu_{\ell(G)}(A,B)\leq \nu(A,B)$. And if $\Gamma_{r(G)}(A,B)$ is nonempty, 
we have $\mu_{r(G)}(A,B)\geq \mu(A,B)$ and $\nu_{r(G)}(A,B)\geq \nu(A,B)$.
\end{theorem}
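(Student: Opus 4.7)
The four inequalities are related by the reflection $x\mapsto -x$, which interchanges $\ell(G)$ with $r(G)$ and leftmost with rightmost paths, so it suffices to establish $\mu_{\ell(G)}(A,B)\leq\mu(A,B)$; the $\nu$-case will follow by a parallel argument. Write $E_1$ for the event that $\Gamma_{\ell(G)}(A,B)$ contains an open path and $E_2$ for the analogous event in $\Gamma(A,B)$, so $E_1\subseteq E_2$. The decisive starting observation is that on $E_1$ the leftmost open paths $\lambda_1\in\Gamma_{\ell(G)}(A,B)$ and $\lambda_2\in\Gamma(A,B)$ coincide: since $\lambda_1$ is an open path in $\Lambda$, leftmostness gives $\lambda_2(k)\leq \lambda_1(k)<\inf P_1(G\cap L_k)$ for every $k$, placing $\lambda_2$ in $\ell(G)$; leftmostness in $\ell(G)$ then forces $\lambda_1\leq\lambda_2$, whence $\lambda_1=\lambda_2$. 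Thus $\mu_{\ell(G)}(A,B)$ is the conditional law of $\lambda_2$ given $E_1$ and $\mu(A,B)$ is its conditional law given $E_2$.

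For any increasing functional $\Phi$ on $\Gamma(A,B)$, decomposing $E_2=E_1\sqcup(E_2\setminus E_1)$ shows the target inequality is equivalent to
\[\Exp[\Phi(\lambda_2)\mid E_1]\leq\Exp[\Phi(\lambda_2)\mid E_2\setminus E_1].\]
I would then reduce to the case of a single-point $G=\{g\}$. Writing $\ell(G)=\bigcap_k\ell(\{(c_k,k)\})$ with $c_k:=\inf P_1(G\cap L_k)$, and proving the theorem in the setting where $\Lambda$ is allowed to be replaced by any sublattice that is a left down-set on each level (the notation already allows this), one peels off one level-threshold at a time, so the whole statement follows by induction from the single-point case.

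The single-point step is the main obstacle. For $g=(x_g,y_g)$ the task is to show that conditioning $\lambda_2$ on $\lambda_2(y_g)<x_g$ produces a stochastically smaller distribution. A generic FKG argument is not available, since the law of $\lambda_2$ on the path lattice need not satisfy the FKG lattice condition. My plan is instead to build an explicit coupling: given a configuration $\omega\in E_2\setminus E_1$, let $k^*(\omega)$ be the first level at which $\lambda_2(\omega)$ reaches or exceeds $x_g$, and perform an edge-surgery in the region strictly to the left of $\lambda_2(\omega)$ anchored at level $k^*(\omega)$, producing a configuration in $E_1$ whose leftmost open path is pointwise to the left of $\lambda_2(\omega)$. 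The hardest part is the measure-theoretic bookkeeping: the randomized surgery has to be arranged so that its averaged marginals recover exactly the conditional laws $\Prob(\,\cdot\mid E_1)$ and $\Prob(\,\cdot\mid E_2\setminus E_1)$, which is where I expect the bulk of the technical effort to lie.
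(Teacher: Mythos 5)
Your opening reductions are sound: the observation that on $E_1$ the leftmost open path in $\ell(G)$ coincides with the leftmost open path in $\Lambda$ is correct (and is a clean way to recast both measures as conditional laws of the single random path $\lambda_2$), the reduction of the target inequality to $\Exp[\Phi(\lambda_2)\mid E_1]\leq\Exp[\Phi(\lambda_2)\mid E_2\setminus E_1]$ is a valid convex-combination argument, and the symmetry remark disposing of the other three inequalities matches what the paper does. But from that point on the proposal is a plan, not a proof, and the plan stops exactly where the real difficulty begins. The peeling reduction to a single-point $G$ silently requires the theorem in a strengthened form, with the ambient region $\Lambda$ replaced by an arbitrary ``left down-set'' sublattice; this is precisely the kind of strengthening the paper has to formulate (Proposition \ref{p1}, comparing regions $b(\tau_1,\tau_3)$ and $b(\tau_1,\tau_2)$ between paths), and it is not something the bare statement of Theorem \ref{t1} gives you for free -- it has to be carried through the whole induction, which is why the paper's argument is structured around that proposition rather than around Theorem \ref{t1} itself.

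The decisive gap is the single-point step. You correctly note that FKG is unavailable, and you propose an edge-surgery coupling anchored at the first level where $\lambda_2$ reaches $x_g$, but you give no construction of the randomized surgery and explicitly defer the ``measure-theoretic bookkeeping'' that would make its averaged marginals equal the two conditional laws. That bookkeeping is not a technicality; it is the theorem. The natural way to make such a conditioning argument rigorous is to partition $E_2\setminus E_1$ (or the analogous ``no open path in the smaller region'' event) according to the random set $\Phi$ of edges in open paths emanating from $A$, use independence of the bonds strictly to the right of $\Phi$, and then compare leftmost-path laws in the region to the right of $\Phi$ with those in the original region -- but that comparison is again an instance of the restricted-region statement one is trying to prove, which is why the paper runs a double induction (on the height $n$, and within it on the index $j$ of the sets $C_j$), supported by Lemma \ref{l2} on product measures and Lemma \ref{l31} and its corollaries on adding points to $A$ and $B$. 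The alternative the paper offers is the van den Berg--H\"aggstr\"om--Kahn Markov chain, which is a coupling of two stationary dynamics rather than a configuration-by-configuration surgery; that is a genuinely different mechanism from what you sketch, and its correctness rests on Proposition \ref{p3} (invariance of $P_p(\cdot\mid T)$), not on exhibiting a measure-preserving map from $E_2\setminus E_1$ into $E_1$. As it stands, your proposal identifies the right target inequality but leaves its proof -- the heart of the matter -- unestablished.
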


This result has the following immediate corollary:
\begin{corollary}\label{t2}

  Let $m<n$, let $A$ and $B$ be finite subsets of $L_m$ and $L_n$ respectively such that $\Gamma(A,B)$ is nonempty and suppose $\mathbf a$ is a point in $L_m$ that is strictly to the right of $A$.
Then, $\mu(A\cup \{\mathbf a\},B)\geq  \mu(A,B)$ and $\nu(A\cup \{\mathbf a\},B)\geq  \nu(A,B)$. Moreover, if $\mathbf b$ is a point in $L_n$ that is strictly to the right of $B$, then 
$\mu(A ,B\cup \{\mathbf b\})\geq  \mu(A,B)$ and  $\nu(A ,B\cup \{\mathbf b\})\geq  \nu(A,B)$. If, instead $\mathbf a$ is strictly to the left of $A$  (\ $\mathbf b$ is strictly to the left of $B$) then the first two (last two) inequalities are reversed.
\end{corollary}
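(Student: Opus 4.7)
The plan is to derive this corollary directly from Theorem \ref{t1}, by a careful choice of the set $G$ that reduces the augmented starting (or ending) set back to the original one.

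To prove the first inequality $\mu(A \cup \{\mathbf a\}, B) \geq \mu(A, B)$, I would apply Theorem \ref{t1} with starting set $A \cup \{\mathbf a\}$ (in place of $A$) and with $G = \{\mathbf a\}$. The key observation is to unravel $\ell(\{\mathbf a\})$: at level $L_m$ it consists exactly of points with first coordinate strictly less than that of $\mathbf a$ (so in particular it contains $A$ but not $\mathbf a$, since by hypothesis $\mathbf a$ is strictly to the right of $A$), while at every other level $L_k$ the convention $\inf \emptyset = +\infty$ makes $\ell(\{\mathbf a\}) \cap L_k = L_k$. Consequently, any path from $A \cup \{\mathbf a\}$ to $B$ contained in $\ell(\{\mathbf a\})$ must start at a point of $A$ and is otherwise unrestricted, i.e., $\Gamma_{\ell(\{\mathbf a\})}(A \cup \{\mathbf a\}, B) = \Gamma(A, B)$, which is nonempty by hypothesis. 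It follows that $\mu_{\ell(\{\mathbf a\})}(A \cup \{\mathbf a\}, B) = \mu(A, B)$, and Theorem \ref{t1} then yields $\mu(A, B) \leq \mu(A \cup \{\mathbf a\}, B)$. The same argument works verbatim for the rightmost path and gives the companion inequality for $\nu$.

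For the statement involving $\mathbf b$ strictly to the right of $B$, I would run the symmetric argument on the ending side: take $G = \{\mathbf b\}$ and apply Theorem \ref{t1} with ending set $B \cup \{\mathbf b\}$ (in place of $B$); the same bookkeeping shows $\Gamma_{\ell(\{\mathbf b\})}(A, B \cup \{\mathbf b\}) = \Gamma(A, B)$ and gives the claimed inequality. For the reversed inequalities, when $\mathbf a$ is strictly to the \emph{left} of $A$ (respectively $\mathbf b$ strictly to the left of $B$), I would use the other half of Theorem \ref{t1}, replacing $\ell(\{\mathbf a\})$ by $r(\{\mathbf a\})$ (respectively $r(\{\mathbf b\})$); the analogous identification of path sets then produces the opposite stochastic inequality.

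There is no real obstacle: the argument is essentially a reduction, and the only point requiring care is verifying the set equality $\Gamma_{\ell(\{\mathbf a\})}(A \cup \{\mathbf a\}, B) = \Gamma(A, B)$, which in turn depends on correctly handling the convention $\inf \emptyset = +\infty$ so that the restriction to $\ell(\{\mathbf a\})$ imposes a constraint only at level $L_m$.
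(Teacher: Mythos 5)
Your proposal is correct and is exactly the paper's argument: the paper proves the corollary by applying Theorem \ref{t1} with $A$ replaced by $A\cup\{\mathbf a\}$ and $G=\{\mathbf a\}$ (and the symmetric choices for the other cases), which is precisely your reduction. You merely fill in the detail the paper leaves implicit, namely that $\Gamma_{\ell(\{\mathbf a\})}(A\cup\{\mathbf a\},B)=\Gamma(A,B)$ under the convention $\inf\emptyset=\infty$, and that verification is accurate.
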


The different parts of the corollary follow from the theorem by making appropriate choices of the sets $A,B,G$. For example, for the first part of the corollary, replace $A$ in the theorem by $A \cup \{\mathbf a\}$ and let $G = \{\mathbf a\}$.   

In Section  2 we  prove two elementary lemmas. Then, in Section 3 we prove a key proposition 
and with it in hand we prove the
theorem, using a somewhat involved inductive argument that was hinted at in \cite{G}. In Section 4, we give an alternate proof of our main theorem, based on a Markov chain that was introduced in \cite{BHK}.  We discovered this approach after we had fully developed our inductive argument. The Markov chain argument is shorter, but we have not been able to generalize it to other models quite as well as our inductive argument.  Finally, in the last two sections, we discuss extensions to other models and derive some consequences of the theorem and its corollary following the ideas of \cite{G}.

\section{Basic lemmas}\label{sec:bl}
We start this section with a very simple lemma:

\begin{lemma}  \label{l1} Let $m<n$ and let $G$ be a non-empty subset of $\Lambda$ such that the projection $P_1(G)$ is bounded below. Then, there exists a path $\tau_G$  
from $L_m$ to $L_n$ such that any other path $\gamma$ from $L_m$ to $L_n$  is strictly to the left  of $G$ if and only if it is strictly to the left of $\tau_G$. If instead $P_1(G)$ is bounded above, the same statement holds if we substitute right for left.
\end{lemma}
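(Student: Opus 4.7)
The plan is to construct $\tau_G$ by an explicit formula that encodes how far to the right a path can reach at each level without violating ``strictly to the left of $G$'', then verify the biconditional directly. For each $k$ with $m \leq k \leq n$, set $f(k) = \inf P_1(G \cap L_k)$ with the convention $\inf \emptyset = +\infty$, and define
$$\tau_G(k) \;=\; \min\bigl\{ f(j) + |k - j| \,:\, m \leq j \leq n,\ G \cap L_j \neq \emptyset \bigr\}.$$
Morally, $\tau_G$ is the $\pm 1$-Lipschitz lower envelope of the left boundary of $G$, with the offset arranged so that $\tau_G$ lands back into the sublattice $\Lambda$.

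Assuming $G$ meets some level in $[m,n]$ (the non-trivial case; see the last paragraph), the minimum is a finite integer at every $k$. The first step is to check that $\tau_G$ is a valid path in $\Lambda$ from $L_m$ to $L_n$: the parity $\tau_G(k) + k$ is even because $f(j) + j$ is even and $|k - j|$ has the parity of $k - j$; and $|\tau_G(k+1) - \tau_G(k)| = 1$, since shifting $k$ by $1$ moves each $|k - j|$ by $\pm 1$, so the minimum moves by at most $1$ in absolute value, and the step parity forces it to be exactly $\pm 1$. For the biconditional, suppose first that $\gamma$ is strictly left of $G$. Then at every $j \in [m,n]$ with $G \cap L_j \neq \emptyset$ we have $\gamma(j) < f(j)$, which by same-parity refinement sharpens to $\gamma(j) \leq f(j) - 2$. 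The $\pm 1$-step property then yields $\gamma(k) \leq \gamma(j) + |k - j| \leq f(j) - 2 + |k - j|$; minimizing over $j$ gives $\gamma(k) \leq \tau_G(k) - 2 < \tau_G(k)$, so $\gamma$ is strictly left of $\tau_G$. Conversely, $\tau_G(k) \leq f(k)$ at every $k$ with $G \cap L_k \neq \emptyset$ (by taking $j = k$), so if $\gamma$ is strictly left of $\tau_G$ then $\gamma(k) < \tau_G(k) \leq f(k)$ there; at levels where $G \cap L_k = \emptyset$ the condition ``$\gamma(k) < f(k) = +\infty$'' is automatic. The right-sided statement follows by reflecting the whole argument under $x \mapsto -x$.

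The main obstacle, as I see it, is the parity and step bookkeeping: one must consistently remember that ``strictly less than'' between two points of $\Lambda$ at the same level means differing by at least $2$, not $1$, which accounts for the $-2$ shifts that appear in the inequalities above. A minor secondary issue is the degenerate case in which $G$ has no points at any level in $[m,n]$, where ``strictly left of $G$'' is vacuous for every path; one disposes of this by choosing $\tau_G$ to be any path lying entirely to the right of $P_1(G)$, so that the equivalence holds trivially for every path other than $\tau_G$ itself.
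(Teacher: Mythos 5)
Your construction is essentially the paper's own proof: the paper sets $B_{x,y}=\{(u,v)\in\Lambda : u\geq x+|v-y|\}$ and defines $\tau_G(k)=\inf P_1\bigl(B(G)\cap L_k\bigr)$, which is precisely your lower envelope $\min_j\bigl(f(j)+|k-j|\bigr)$ (the paper cones over all of $G$, you only over levels $m,\dots,n$, which is the version that matches the level-by-level definition of ``strictly to the left of $G$''), and your parity, unit-step and biconditional checks are exactly the ``easy to verify'' details the paper omits. One caveat: your closing remark about the degenerate case where $G$ meets no level in $[m,n]$ does not work --- there every path is vacuously strictly to the left of $G$, so no genuine path $\tau_G$ can make the equivalence hold --- but this marginal case is equally unaddressed by the paper's proof and never occurs in the lemma's applications, where $G$ always meets the relevant strip (or the conclusion being drawn is trivial).
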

\begin{proof}{Lemma}{l1} Suppose $P_1(G)$ is bounded below. 
For each $(x,y)\in G$, let $B_{x,y}= \{(u,v)\in \Lambda : u\geq x+\vert v-y\vert \}$ and let
$B(G)=\cup_{(x,y)\in G}B_{x,y}$. Then  for $m\leq j \leq n$ define $x_j=\inf P_1(B(G)\cap L_j)$. It is now easy to verify that the sequence $(x_j,j): m\leq j \leq n$ defines
 a path $\tau_G$ from $L_m$ to $L_n$
satisfying the conclusion of the lemma. A similar proof works when $P_1(G)$ is bounded above.
\end{proof}

\bigskip

Before stating our next lemma, we  introduce some further notation: Fix integers $0 \leq m < j < n$ and sets $A \subset L_m, B \subset L_n, C \subset L_j$.  Given a probability measure $\rho $ on 
 $\Gamma(A,C,B) $ we call $\rho_1$ and $\rho_2$ its marginals on $\Gamma(A,C) $ and  $\Gamma(C,B) $ 
respectively. They are given by:
$$\rho_1(\gamma_1) =\sum_{\gamma_2 \in \Gamma (C,B)}\rho(\gamma_1\gamma_2)\mbox{ and}$$
$$\rho_2(\gamma_2) =\sum_{\gamma_1 \in \Gamma (A,C)}\rho(\gamma_1\gamma_2).$$
Given $\gamma_1 \in \Gamma (A,C)$ such that $\rho_1(\gamma_1)>0$, we define the conditional measure  $\rho( \bullet \vert \gamma_1)$ on $\Gamma(C,B) $  by:
$$\rho(\gamma_2\vert \gamma_1)=\frac{\rho (\gamma_1\gamma_2)}{\rho_1(\gamma_1)}.$$  We note that we have perhaps abused the conditional probability notation here slightly, since it may seem more technically accurate to let $\rho(\bullet \vert \gamma_1)$ denote a probability
measure on paths from $A$ to $B$ (with the portion from $A$ to $C$ equaling $\gamma_1$), rather than the way we have defined it, which is as a probability measure on paths from $C$ to $B$, but we hope that this abuse will not cause any confusion for the reader.

\bigskip

The following lemma is needed to prove the key proposition stated in the next section.

\begin{lemma}\label{l2}
Let $m, j$ and $n$ be integers such that $0\leq m<j<n$. Then, let $A\subset L_m$ and $B\subset L_n$ be finite and let ${\bf a} \in L_j$ be such that $\Gamma(A,{\bf a})$
and $\Gamma({\bf a},B)$ are nonempty. Finally, let  $\rho_1$ and $\rho_2$ be probability measures on
 $\Gamma (A,{\bf a})$ and on $\Gamma({\bf a},B)$ respectively and let $\rho$ be the probability measure on 
 $\Gamma(A,{\bf a},B) $ defined by $\rho(\gamma_1\gamma_2)=\rho_1(\gamma_1)\rho_2(\gamma_2)$.

If $\sigma $ is a probability measure on  $\Gamma(A,{\bf a},B) $ such that

i) $\rho_1\leq \sigma_1$.

ii)  $\rho_2\leq \sigma(\bullet \vert \gamma_1)$  for any $\gamma_1 \in \Gamma (A,{\bf a})$ such that $\sigma_1(\gamma_1)>0$,
\newline then $\rho\leq \sigma$.

\end{lemma}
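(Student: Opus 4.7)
The plan is to verify the stochastic domination $\rho \leq \sigma$ directly from its definition, by comparing $\int \Phi \, d\rho$ and $\int \Phi \, d\sigma$ for an arbitrary increasing function $\Phi$ on $\Gamma(A,{\bf a},B)$. The pivotal observation is that, since every path in $\Gamma(A,{\bf a},B)$ passes through ${\bf a}$, the coordinate-wise partial order on paths factors through concatenation: one has $\gamma_1\gamma_2 \leq \gamma_1'\gamma_2'$ in $\Gamma(A,{\bf a},B)$ if and only if $\gamma_1 \leq \gamma_1'$ in $\Gamma(A,{\bf a})$ and $\gamma_2 \leq \gamma_2'$ in $\Gamma({\bf a},B)$. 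Consequently, an increasing $\Phi$ on $\Gamma(A,{\bf a},B)$ is separately increasing in each argument when the other is held fixed.

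Given this, I would first condition on the initial portion of the path to write
$$\int \Phi \, d\sigma \;=\; \sum_{\gamma_1 : \sigma_1(\gamma_1) > 0} \sigma_1(\gamma_1) \sum_{\gamma_2 \in \Gamma({\bf a},B)} \Phi(\gamma_1\gamma_2)\, \sigma(\gamma_2 \vert \gamma_1).$$
For each such $\gamma_1$, the map $\gamma_2 \mapsto \Phi(\gamma_1\gamma_2)$ is increasing on $\Gamma({\bf a},B)$, so assumption (ii) yields
$$\sum_{\gamma_2} \Phi(\gamma_1\gamma_2)\, \sigma(\gamma_2 \vert \gamma_1) \;\geq\; \sum_{\gamma_2} \Phi(\gamma_1\gamma_2)\, \rho_2(\gamma_2) \;=:\; \Psi(\gamma_1).$$

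Next I would observe that $\Psi$ is an increasing function on $\Gamma(A,{\bf a})$, since it is a nonnegative-weighted sum over $\gamma_2$ of the maps $\gamma_1 \mapsto \Phi(\gamma_1\gamma_2)$, each of which is increasing by the factorization of the order noted above. Applying assumption (i) to $\Psi$ therefore yields $\int \Psi \, d\sigma_1 \geq \int \Psi \, d\rho_1$. Chaining the two inequalities and using the product structure $\rho(\gamma_1\gamma_2) = \rho_1(\gamma_1)\rho_2(\gamma_2)$ to rewrite $\int \Psi \, d\rho_1 = \int \Phi \, d\rho$, I conclude that $\int \Phi \, d\sigma \geq \int \Phi \, d\rho$, which is the desired stochastic inequality.

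The argument is essentially a standard two-step coupling executed at the level of test functions, and the only mildly delicate point is that condition (ii) is only assumed on the support of $\sigma_1$; but this is exactly the set indexing the decomposition of $\int \Phi \, d\sigma$, so no issue arises from values of $\gamma_1$ that might lie in the support of $\rho_1$ but not of $\sigma_1$. Beyond this small bookkeeping, every step is routine once the factorization of the partial order on concatenated paths through the fixed midpoint ${\bf a}$ has been recorded.
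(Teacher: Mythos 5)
Your argument is correct and is essentially the same as the paper's proof: condition on $\gamma_1$, apply hypothesis (ii) using that $\gamma_2\mapsto\Phi(\gamma_1\gamma_2)$ is increasing, then apply hypothesis (i) to the increasing function $\gamma_1\mapsto\sum_{\gamma_2}\Phi(\gamma_1\gamma_2)\rho_2(\gamma_2)$, and finish with the product structure of $\rho$. Your explicit remarks on the factorization of the partial order through the midpoint $\mathbf a$ and on the support of $\sigma_1$ are details the paper leaves implicit, but the route is identical.
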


\begin{proof}{Lemma}{l2}
 Let $\phi$ be increasing on $\Gamma(A,{\bf a},B) $. Then write
$$\int_{\Gamma(A,{\bf a},B) } \phi(\gamma)d\sigma(\gamma)=$$
$$\sum_{\gamma_1\in \Gamma(A,{\bf a}) }\Big( \sum_{\gamma_2 \in \Gamma({\bf a},B) }\phi(\gamma_1 \gamma_2)\sigma(\gamma_2\vert \gamma_1)\Big) \sigma_1(\gamma_1)\geq$$
$$\sum_{\gamma_1\in \Gamma(A,{\bf a})}\Big( \sum_{\gamma_2\in \Gamma({\bf a},B)}\phi(\gamma_1 \gamma_2)\rho_2(\gamma_2 )\Big) \sigma_1(\gamma_1)\geq$$
$$\sum_{\gamma_1\in \Gamma(A,{\bf a})}\Big( \sum_{\gamma_2\in \Gamma({\bf a} ,B)}\phi(\gamma_1 \gamma_2)\rho_2(\gamma_2 )\Big) \rho_1(\gamma_1)=$$
$$\int_{\Gamma(A,{\bf a},B) } \phi(\pi) d\rho(\pi),$$
where the first inequality follows from the fact that for all $\gamma_1 \in  \Gamma(A,{\bf a}) $ $\sigma(\bullet \vert \gamma_1)\geq \rho_2$ and 
$\gamma_2\rightarrow \phi(\gamma_1 \gamma_2)$ is an increasing function and the second inequality follows from the fact that 
 $\sigma_1\geq \rho_1$ and $$\gamma_1\rightarrow \sum_{\gamma_2\in \Gamma({\bf a},B)} \phi(\gamma_1 \gamma_2)\rho_2(\gamma_2)$$ is an increasing function.\end{proof}

\section{Proof of Theorem \ref{t1}}\label{proofs}

We have defined $\mu(A,B)$ as the conditional distribution of the leftmost open path from $A$ to $B$ given the event
$$H=\{\mbox{there exists an open path from } A \mbox{ to } B\}.$$
In this section we adopt the following notation: if $F$ is another event such that $P(F\cap H)>0$, then
$\mu^F (A,B)$  is the distribution of the leftmost  open path from $A$ to $B$ given the event $F\cap H$. The same notation will apply to distributions such as
$\mu(A,C,B)$, $\mu_{\Lambda_0}(A,B)$ etc.\ and to distributions of rightmost open paths such as $\nu(A,B)$, $\nu(A,C,B)$, $\nu_{\Lambda_0}(A,B)$ etc.

We now extend the notion of paths from $L_m$ to $L_n$ by adding two extra paths: the sequence $(-\infty,m),\dots,(-\infty,n)$ and  the sequence $(\infty,m),\dots,(\infty,n)$. We often suppress $m$ and $n$ and simply denote these paths
by $-\infty$ and $\infty$ respectively. The path $-\infty$ ($\infty$) will be considered as being strictly to the left (right)
of any other path from $L_m$ to $L_n$, and also of any subset of $\Lambda$. Finally, given $0\leq m<n$ and two paths $\tau_1$ and $\tau_2$ from 
$L_m$ to $L_n$ such that $\tau_1$ is strictly to the left of $\tau_2$ we let $b(\tau_1,\tau_2)$ be the set of points that are strictly to the right of $\tau_1$ and strictly to the left of $\tau_2$.  

We now state a proposition which is a slightly weaker version of Theorem \ref{t1}, namely:
\begin{proposition}\label{p1}
 Let $0\leq m<n$, let $\tau_1$,$\tau_2$ and $\tau_3$ be  paths from  $L_m$ to  $L_n$ such that $\tau_1$ is  to
the left of $\tau_3$ and $\tau_3$ is  to the left of  $\tau_2$,
and let $A$ and $B$ be non-empty finite subsets of $L_m$ and $L_n$ respectively.
If $A$ is strictly to the right of $\tau_3$ and there exists a path from $A$ to $B$ in $b(\tau_3,\tau_2)$,  then
  $\mu_{b(\tau_1,\tau_2)}(A,B)\leq \mu_{b(\tau_3,\tau_2)}(A,B)$ and
$\nu_{b(\tau_1,\tau_2)}(A,B)\leq \nu_{b(\tau_3,\tau_2)}(A,B)$. Similarly, if
$A$ is strictly to the left of $\tau_3$ and there exists a path from $A$ to $B$ in $b(\tau_1,\tau_3)$ we have  $\mu_{b(\tau_1,\tau_3)}(A,B)\leq 
\mu_{b(\tau_1,\tau_2)}(A,B)$ and $\nu_{b(\tau_1,\tau_3)}(A,B)\leq 
\nu_{b(\tau_1,\tau_2)}((A,B)$.
\end{proposition}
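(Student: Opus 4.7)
I plan to prove the first inequality (with $A$ strictly right of $\tau_3$, for the leftmost measures $\mu$); the other three follow by symmetry---swapping left/right for $\nu$, and horizontal reflection for the ``$A$ left of $\tau_3$'' case.

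The starting point will be a pathwise coupling: construct both $\pi^{(1)}$, the leftmost open path in $b(\tau_1,\tau_2)$, and $\pi^{(3)}$, the leftmost open path in $b(\tau_3,\tau_2)$, from the same Bernoulli edge configuration. Let $E_1$ and $E_3$ denote the events that $\pi^{(1)}$ and $\pi^{(3)}$ respectively exist; clearly $E_3 \subset E_1$. On $E_3$, $\pi^{(3)}$ is an open path in the larger strip, so $\pi^{(1)} \leq \pi^{(3)}$ pointwise for free. The obstruction is that the two measures we are comparing are conditioned on different events, $E_1$ and $E_3$; an elementary manipulation reduces the proposition to the one-sided residual bound $E[\phi(\pi^{(1)}) \mid E_1 \setminus E_3] \leq E[\phi(\pi^{(3)}) \mid E_3]$ for every increasing $\phi$.

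I would handle this residual bound by induction on $n - m$. On $E_1 \setminus E_3$, $\pi^{(1)}$ cannot stay strictly right of $\tau_3$; since $A$ is strictly right of $\tau_3$ and same-level coordinates share parity, there must be a first level $J \in \{m+1,\dots,n\}$ with $\pi^{(1)}(J) = \tau_3(J)$ (parity forces the first crossing to be an equality, not an overshoot). After conditioning on $J$ and on the initial segment $\pi^{(1)}|_{[m,J]}$, the tail has length $n - J < n - m$, so the inductive hypothesis applies to the tail problem from $(\tau_3(J),J)$ to $B$. Lemma~\ref{l2}, designed precisely to combine a marginal domination with a conditional domination across a splitting level, would then recombine the head and tail comparisons into the desired domination for the full path.

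The hard part will be setting up the inductive hypothesis so that it accommodates the tail: once the path touches $\tau_3$ at $(\tau_3(J),J)$, the tail starts on $\tau_3$ rather than strictly to the right of it, so the induction must be strengthened to allow starting sets that include points on $\tau_3$ (either by relaxing the strict-right requirement, or by introducing a slightly shifted barrier at level $J$). A secondary subtlety will be verifying both the marginal and conditional dominance hypotheses of Lemma~\ref{l2} at the split level after conditioning on the initial segment, and checking that this conditioning does not destroy the tail comparison. The base case $n - m = 1$ reduces to a direct case analysis on single-edge paths.
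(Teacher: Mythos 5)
Your setup is sound as far as it goes: the reduction of the proposition to the residual bound $E[\phi(\pi^{(1)})\mid E_1\setminus E_3]\leq E[\phi(\pi^{(3)})\mid E_3]$ is a correct convexity argument, the pointwise domination $\pi^{(1)}\leq\pi^{(3)}$ on $E_3$ is right, and the parity argument for a first touching level $J$ with $\pi^{(1)}(J)=\tau_3(J)$ is right. The gap is in the step where you ``recombine the head and tail comparisons'' with Lemma \ref{l2}. That lemma compares two measures that are \emph{both} supported on $\Gamma(A,\mathbf a,B)$, i.e.\ on paths through one common intermediate point $\mathbf a$. After you condition on $\{J=j,\ \pi^{(1)}\vert_{[m,j]}=\gamma_1\}$, your left-hand measure is supported on paths through $(\tau_3(j),j)$, but the right-hand measure, the law of $\pi^{(3)}$ given $E_3$, assigns probability zero to every such path (since $\pi^{(3)}$ is strictly to the right of $\tau_3$), so Lemma \ref{l2} simply does not apply. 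Nor does the head comparison come for free: on the disjoint events $E_1\setminus E_3$ and $E_3$ there is no coupling constraint, and the fixed head $\gamma_1$ need not lie pointwise to the left of typical heads of $\pi^{(3)}$ (before level $J$ it may wander well to the right of $\tau_3$). Comparing ``leftmost path forced to touch $\tau_3$'' with ``leftmost path strictly to the right of $\tau_3$'' is exactly the hard content of the result; in the paper it is not a one-line recombination but the inequality \eqref{e2-1}, proved by a second induction in which the intermediate point is slid leftward through the sets $C_j$, using Corollaries \ref{c3} and \ref{c4}, which in turn rest on Lemma \ref{l31}, itself extracted from the inductive hypothesis.

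A second, related problem is your choice of conditioning. The event that a given $\gamma_1$ is the initial segment of the leftmost open path in $b(\tau_1,\tau_2)$ is not measurable with respect to the edges on one side of $\gamma_1$, and the event $E_1\setminus E_3$ is a decreasing event in the edges strictly to the right of $\tau_3$; together they do not leave the tail configuration with any usable product structure, so ``the inductive hypothesis applies to the tail problem'' is not justified as stated. The paper avoids this by never conditioning on the extremal path's initial segment: it conditions on the random set $\Phi$ of \emph{all} edges in open paths from $A$ (or into $B$), for which the configuration strictly to the right of $\Phi$ is genuinely unconditioned, and then invokes Lemma \ref{l1} plus the inductive hypothesis. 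It also removes the barrier one vertex of $\tau_3$ at a time (the sets $\Lambda_i$), precisely so that the failure event forces passage through a single specified point $\mathbf a_{i+1}$, which is what makes a Lemma \ref{l2} argument legitimate; your one-shot exclusion of the whole region left of $\tau_3$ forfeits that. Finally, your induction on $n-m$ would still need the strengthening you flag (starting points on $\tau_3$) and a separate treatment of the boundary case $J=n$, mirroring the paper's special argument for $i=n-1$; these are fixable, but the two issues above are where the proposal, as written, does not go through.
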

The proof of this proposition requires a rather involved inductive argument. To facilitate its reading, we start deriving some consequences  of the inductive hypothesis 
of that proof. That is the purpose of our next lemma and its corollaries.
\begin{lemma}\label{l31}
 Suppose that Proposition \ref{p1} holds for $m=0$ and some $n>0$. Let $\tau_1$ and $\tau_2$ be paths from $L_0$ to $L_n$ such that $\tau_1$ is strictly to the 
left of $\tau_2$, let  $A$ and $B$ be finite subsets of $L_0$ and $L_n$ respectively,  and  let   $\mathbf a$ be a point in $L_0$ and $\mathbf b$ a point in $L_n$.
Assuming there exists a path in $b(\tau_1,\tau_2)$ from $A$ to $B$, the following statements hold:

i) If $\mathbf a$ is strictly to the right of $A$ then,
$$\mu_{b(\tau_1,\tau_2)}(A,B)\leq \mu_{b(\tau_1,\tau_2)}(A\cup \{\mathbf a\},B)\mbox{ and}$$
$$\nu_{b(\tau_1,\tau_2)}(A,B)\leq \nu_{b(\tau_1,\tau_2)}(A\cup \{\mathbf a\},B).$$

ii) If $\mathbf b$ is strictly to the right of $B$ then,
$$\mu_{b(\tau_1,\tau_2)}(A,B)\leq \mu_{b(\tau_1,\tau_2)}(A,B\cup \{\mathbf b\})\mbox{ and}$$
$$\nu_{b(\tau_1,\tau_2)}(A,B)\leq \nu_{b(\tau_1,\tau_2)}(A,B\cup \{\mathbf b\}).$$
If instead,
$\mathbf a$ is strictly to the left of $A$ the first two inequalities   are reversed while if
$\mathbf b$ is strictly to the left of $B$ the last two  are reversed. 
\end{lemma}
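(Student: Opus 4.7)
My plan is to prove in detail only the first inequality of part~(i), namely $\mu_{b(\tau_1,\tau_2)}(A,B)\leq \mu_{b(\tau_1,\tau_2)}(A\cup\{\mathbf{a}\},B)$, under the hypothesis that $\mathbf{a}$ is strictly to the right of $A$. The $\nu$-version and the $B$-side counterparts in part~(ii) will be handled by the same method with the obvious switches of leftmost/rightmost and $A$/$B$, and the four reversed inequalities (when $\mathbf{a}$ or $\mathbf{b}$ lies strictly on the other side) will be deduced by applying the just-proved inequalities to the enlarged sets, exactly as Corollary~\ref{t2} is deduced from Theorem~\ref{t1}. I would dispose first of the trivial case $\mathbf{a}\notin b(\tau_1,\tau_2)$, in which the two measures coincide because no path from $A\cup\{\mathbf{a}\}$ in the strip can start at $\mathbf{a}$.

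My approach uses a conditional decomposition based on the event $H_A$ that there is an open path from $A$ to $B$ in $b(\tau_1,\tau_2)$; write $H_{\mathbf{a}}$ for the analogous event for paths from $\mathbf{a}$. Since $A$ lies strictly to the left of $\mathbf{a}$ at level~$0$, the leftmost open path from $A\cup\{\mathbf{a}\}$ coincides with the leftmost open path from $A$ on $H_A$, and starts at $\mathbf{a}$ on the disjoint event $H_A^c\cap H_{\mathbf{a}}$. Writing $\mu_{b(\tau_1,\tau_2)}(A\cup\{\mathbf{a}\},B)$ as the resulting convex combination of $\mu_{b(\tau_1,\tau_2)}(A,B)$ and the conditional law $\sigma$ of the leftmost open path from $\mathbf{a}$ given $H_A^c\cap H_{\mathbf{a}}$, a direct computation reduces the target inequality to establishing the single stochastic order $\sigma\geq\mu_{b(\tau_1,\tau_2)}(A,B)$.

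I expect this last comparison to be the main obstacle, because it simultaneously changes the starting point (from $A$ to $\mathbf{a}$) and conditions on a decreasing event ($H_A^c$). My plan is to split it into two sub-steps. First, a hybrid-path coupling argument: whenever both leftmost paths exist on the same edge configuration, the one from $\mathbf{a}$ pointwise dominates the one from $A$; indeed, if $\gamma_A(j^*)>\gamma_{\mathbf{a}}(j^*)$ at some level, then since $\gamma_A(0)<\gamma_{\mathbf{a}}(0)$ and the difference changes by $0$ or $\pm 2$ between consecutive levels, the two paths must meet at some intermediate level $j_0$, and splicing $\gamma_A$ up to $j_0$ with $\gamma_{\mathbf{a}}$ after $j_0$ yields an open path from $A$ to $B$ strictly to the left of $\gamma_A$ at $j^*$, contradicting the leftmost property. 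Second, an FKG-type positive-correlation argument: the leftmost-position functional at each level is a decreasing function of the edge configuration and $H_A^c$ is a decreasing event, so conditioning on $H_A^c$ can only push the leftmost open path from $\mathbf{a}$ further to the right within $H_{\mathbf{a}}$. A secondary ingredient, used to patch the two sub-steps through the different conditioning events, is the auxiliary path $\tau_3=\min(\tau_{\{\mathbf{a}\}},\tau_2)$ obtained from Lemma~\ref{l1} with $G=\{\mathbf{a}\}$: it is a valid path between $\tau_1$ and $\tau_2$ on which $\mathbf{a}$ lies and $A$ lies strictly to the left, so that Proposition~\ref{p1} applied to $A$ gives the identifications $\mu_{b(\tau_1,\tau_3)}(A\cup\{\mathbf{a}\},B)=\mu_{b(\tau_1,\tau_3)}(A,B)=\mu_{b(\tau_1,\tau_2)}(A,B)$, which convert the pointwise coupling domination into the required stochastic inequality $\sigma\geq\mu_{b(\tau_1,\tau_2)}(A,B)$ and close the proof.
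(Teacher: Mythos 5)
Your reduction is sound as far as it goes: disposing of the case $\mathbf a\notin b(\tau_1,\tau_2)$, splitting on $H_A$ versus $H_A^c\cap H_{\mathbf a}$, noting that the leftmost path from $A\cup\{\mathbf a\}$ agrees with the leftmost path from $A$ on $H_A$ and with the leftmost path from $\mathbf a$ on $H_A^c\cap H_{\mathbf a}$, and reducing everything to $\sigma\geq\mu_{b(\tau_1,\tau_2)}(A,B)$ is exactly the skeleton of the paper's proof (its \eqref{e38}, \eqref{e39} and the final convex-combination step). Your hybrid-splicing coupling is also correct, and it is the ingredient the paper uses in \eqref{e36}. But that coupling only compares the two leftmost paths on the event where \emph{both} exist, i.e.\ on $H_A\cap H_{\mathbf a}$, which is disjoint from the event $H_A^c\cap H_{\mathbf a}$ on which $\sigma$ lives, so the whole burden falls on your second sub-step, and that is where the argument breaks.

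The ``FKG-type'' step is not a valid application of FKG, and it is precisely the deep point of the lemma. You need, for every increasing path functional $\Phi$, that $E[\Phi(\gamma_{\mathbf a})\mid H_{\mathbf a}\cap H_A^c]\geq E[\Phi(\gamma_{\mathbf a})\mid H_{\mathbf a}]$ (the paper's inequality \eqref{e32}). The functional $\Phi(\gamma_{\mathbf a})$ is only defined on the increasing event $H_{\mathbf a}$, and $\Phi(\gamma_{\mathbf a})\indicator_{H_{\mathbf a}}$ is neither increasing nor decreasing in the configuration; moreover the product measure conditioned on the connectivity event $H_{\mathbf a}$ is not positively associated in general (this is exactly the difficulty behind Konno's conjecture and the reason \cite{BHK} and the present inductive machinery are needed), so FKG gives you nothing here. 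The paper instead proves \eqref{e32} by decomposing $F_A=H_A^c$ into events $\{\Phi=\varphi\}$ for the explored set $\varphi$ of edges in open paths from $A$, using the independence of the bonds strictly to the right of $\varphi$, producing the path $\tau_1(\varphi)$ via Lemma~\ref{l1}, and then invoking the assumed Proposition~\ref{p1}; a second chain of such arguments (\eqref{e33}--\eqref{e36}) is needed to get $\mu_{b(\tau_1,\tau_2)}(A,B)\leq\mu_{b(\tau_1,\tau_2)}(\mathbf a,B)$, i.e.\ \eqref{e37}, before the two can be combined. Your proposed patch does not supply this: with $\tau_3=\min(\tau_{\{\mathbf a\}},\tau_2)$, Proposition~\ref{p1} yields only the one-sided inequality $\mu_{b(\tau_1,\tau_3)}(A,B)\leq\mu_{b(\tau_1,\tau_2)}(A,B)$, and the claimed identity $\mu_{b(\tau_1,\tau_3)}(A,B)=\mu_{b(\tau_1,\tau_2)}(A,B)$ is false in general, since the leftmost open path from $A$ in the wide strip need not stay strictly to the left of $\tau_{\{\mathbf a\}}$ and the conditioning events differ. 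So the proposal has a genuine gap at its central step; to close it you should replace the FKG claim by the exploration-plus-Proposition~\ref{p1} argument sketched above.
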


\begin{proof}{Lemma}{l31}
We only prove the first inequality since the proofs of the others are similar.  We may assume that there exists a path from $\mathbf a$ to $B$ between $\tau_1$ and $\tau_2$, since otherwise the result is trivial.
Let $$F_A=\{\mbox{ there is no open path in }b(\tau_1,\tau_2) \mbox{ from }A\mbox{ to } B\},$$
  let
$$F_{\mathbf a}=\{\mbox{ there is no open path in }b(\tau_1,\tau_2) \mbox{ from }\mathbf a\mbox{ to } B\}, $$
and let $\Phi$ be the random set of edges belonging to open paths starting  from $A$ and contained in $b(\tau_1,\tau_2)$ .
Then the event $F_A$ is the union of disjoint events of the form
$\{\Phi=\varphi\}$ where $\varphi$ ranges over a collection $\Upsilon$ of deterministic sets of edges,
and $\mu^{F_A}_{b(\tau_1,\tau_2)}(\mathbf a,B)$ is a convex combination of measures of the form $\mu^{\{\Phi=\varphi\}}_{b(\tau_1,\tau_2)}(\mathbf a,B)$ where 
$\varphi $ is such that there exists a path from $\mathbf a$ to $B$ in $b(\tau_1,\tau_2)$ which is strictly to the right of $\varphi$. Now, we let 
$V(\varphi)$ be the set of points which are vertices of edges in $\varphi$ and observe that on the event $\{\Phi=\varphi\}$ any open path from
$\mathbf a$ to $B$ must be strictly to the right of
$V(\varphi)$. By Lemma \ref{l1} there 
exists a path $\tau(\varphi)$ such that the paths which are  strictly to the right of $V(\varphi)$ are exactly those which are strictly to the right of $\tau(\varphi)$.  
It then follows that 
\begin{equation}\label{e31}
 \mu^{\{\Phi=\varphi\}}_{b(\tau_1,\tau_2)}(\mathbf a,B)=\mu _{b(\tau_1(\varphi),\tau_2)}(\mathbf a,B),
\end{equation}
 where $\tau_1(\varphi)$ is the 
leftmost path such that $\tau_1 \leq \tau_1(\varphi)$ and $\tau(\varphi)\leq \tau_1(\varphi)$.  We have used the fact that $\mathbf a$ is necessarily strictly between $\tau_1(\varphi)$ and $\tau_2$. We also needed the following:  given $\{\Phi = \varphi\}$, the conditional distribution of the openness of the bonds emanating from sites that are strictly to the right of $\tau_1(\varphi)$ is the same as the unconditional distribution. This fact follows easily from the independence that is built into the model. 

Since we are assuming that the conclusion
of Proposition \ref{p1} holds we obtain:
$$ \mu _{b(\tau_1,\tau_2)}(\mathbf a,B)\leq \mu^{\{\Phi=\varphi\}}_{b(\tau_1,\tau_2)}(\mathbf a,B).$$
Therefore,
\begin{equation}\label{e32}
 \mu _{b(\tau_1,\tau_2)}(\mathbf a,B)\leq \mu^{F_A}_{b(\tau_1,\tau_2)}(\mathbf a,B).
\end{equation}
Since $\mu _{b(\tau_1,\tau_2)}(\mathbf a,B)$ is a convex combination of 
$\mu^{F_A}_{b(\tau_1,\tau_2)}(\mathbf a,B)$ and of $\mu^{F_A^c}_{b(\tau_1,\tau_2)}(\mathbf a,B)$, \eqref{e32} implies:
\begin{equation}\label{e33}
\mu^{F_A^c}_{b(\tau_1,\tau_2)}(\mathbf a,B)\leq \mu _{b(\tau_1,\tau_2)}(\mathbf a,B).
\end{equation}
Similarly one shows that
\begin{equation}\label{e34}
\mu^{F_{\mathbf a}}_{b(\tau_1,\tau_2)}(A,B)\leq \mu_{b(\tau_1,\tau_2)}(A,B).
\end{equation}
and that
\begin{equation}\label{e35}
\mu_{b(\tau_1,\tau_2)}(A,B)\leq \mu^{F_{\mathbf a}^c}_{b(\tau_1,\tau_2)}(A,B).
\end{equation}
But on the event $F_A^c\cap F_{\mathbf a}^c$ the leftmost path from $A$ to $B$ is      to the left of the leftmost path from
$\mathbf a$ to $B$, hence
\begin{equation}\label{e36}
\mu^{F_{\mathbf a}^c}_{b(\tau_1,\tau_2)}(A,B)=
\mu^{F_A^c\cap F_{\mathbf a}^c}_{b(\tau_1,\tau_2)}(A,B)\leq
\mu^{F_A^c\cap F_{\mathbf a}^c}_{b(\tau_1,\tau_2)}(\mathbf a,B)= \mu^{F_A^c}_{b(\tau_1,\tau_2)}(\mathbf a,B).
\end{equation}
It now follows from \eqref{e33},\eqref{e35} and \eqref{e36} that:
\begin{equation}\label{e37}
\mu_{b(\tau_1,\tau_2)}(A,B)\leq \mu _{b(\tau_1,\tau_2)}(\mathbf a,B).
\end{equation}
Since on the event $F_A^c$ the leftmost path from $A\cup\{\mathbf a\}$ to $B$ is the same as the leftmost path from $A$ to $B$ we have:
\begin{equation}\label{e38}
 \mu^{F_A^c}_{b(\tau_1,\tau_2)}(A\cup \{\mathbf a\},B)=\mu_{b(\tau_1,\tau_2)}(A,B).
\end{equation}
And, since on the event  $F_A$ the leftmost path from $A\cup\{\mathbf a\}$ to $B$ is the same as the leftmost path from 
$\mathbf a$ to $B$ we also have:
\begin{equation}\label{e39}
 \mu^{F_A}_{b(\tau_1,\tau_2)}(A\cup \{\mathbf a\},B)=\mu^{F_A}_{b(\tau_1,\tau_2)}(\mathbf  a,B).
\end{equation}
  Therefore $ \mu_{b(\tau_1,\tau_2)}(A\cup \{\mathbf a\},B)$ is a convex combination of
$\mu_{b(\tau_1,\tau_2)}(A,B)$ and $\mu^{F_A}_{b(\tau_1,\tau_2)}(\mathbf  a,B)$.
But, it follows from \eqref{e32} and \eqref{e37} that
$$\mu_{b(\tau_1,\tau_2)}(A,B)\leq \mu^{F_A}_{b(\tau_1,\tau_2)}(\mathbf a,B).$$
Therefore
$$\mu_{b(\tau_1,\tau_2)}(A,B)\leq \mu_{b(\tau_1,\tau_2)}(A\cup \{\mathbf a\},B).$$
 
\end{proof}

\begin{corollary}\label{c3}
 Assume the hypothesis of Lemma \ref{l31}  and let   $A'$ be  a finite subset of $L_0$. If $A'$ is strictly to the right of $A$, then
$$\mu_{b(\tau_1,\tau_2)}(A,B)\leq \mu_{b(\tau_1,\tau_2)}(A\cup A',B).$$
If in addition there exists a path in $b(\tau_1,\tau_2)$ from $A'$ to $B$ then,
$$\mu_{b(\tau_1,\tau_2)}(A,B)\leq \mu_{b(\tau_1,\tau_2)}( A',B).$$
If  $A'$ is strictly to the left of $A$, then
$$\mu_{b(\tau_1,\tau_2)}(A\cup A',B)\leq \mu_{b(\tau_1,\tau_2)}(A,B).$$
If in addition there exists a path in $b(\tau_1,\tau_2)$ from $A'$ to $B$ then
$$\mu_{b(\tau_1,\tau_2)}( A',B)\leq \mu_{b(\tau_1,\tau_2)}(A,B).$$
\end{corollary}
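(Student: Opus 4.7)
The plan is to derive all four inequalities from Lemma \ref{l31} by adding the points of $A'$ (or of $A$) to the opposite set one at a time. Since $A, A' \subset L_0$, any finite subset of $L_0$ can be totally ordered by first coordinate. For the first inequality, assume $A'$ is strictly to the right of $A$ and enumerate $A' = \{\mathbf{a}_1, \ldots, \mathbf{a}_k\}$ with $P_1(\mathbf{a}_1) < \cdots < P_1(\mathbf{a}_k)$. Setting $A_0 = A$ and $A_i = A_{i-1} \cup \{\mathbf{a}_i\}$, the ordering guarantees that $\mathbf{a}_i$ is strictly to the right of $A_{i-1}$, while the inclusion $A \subseteq A_{i-1}$ guarantees that a path from $A_{i-1}$ to $B$ in $b(\tau_1, \tau_2)$ exists (since one from $A$ to $B$ exists by assumption). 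Lemma \ref{l31}(i) therefore yields $\mu_{b(\tau_1,\tau_2)}(A_{i-1}, B) \leq \mu_{b(\tau_1,\tau_2)}(A_i, B)$ for each $i$, and chaining gives the desired $\mu_{b(\tau_1,\tau_2)}(A, B) \leq \mu_{b(\tau_1,\tau_2)}(A \cup A', B)$.

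For the second inequality, I iterate in the opposite direction starting from $A'$. Enumerate $A = \{\mathbf{a}'_1, \ldots, \mathbf{a}'_j\}$ with $P_1(\mathbf{a}'_1) > \cdots > P_1(\mathbf{a}'_j)$; then each $\mathbf{a}'_i$ is strictly to the left of $A' \cup \{\mathbf{a}'_1, \ldots, \mathbf{a}'_{i-1}\}$. The additional hypothesis that a path from $A'$ to $B$ exists in $b(\tau_1, \tau_2)$ makes Lemma \ref{l31}(i) applicable in its reversed ``strictly to the left'' form at each step, and chaining gives $\mu_{b(\tau_1,\tau_2)}(A \cup A', B) \leq \mu_{b(\tau_1,\tau_2)}(A', B)$. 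Combining with the first inequality produces $\mu_{b(\tau_1,\tau_2)}(A, B) \leq \mu_{b(\tau_1,\tau_2)}(A', B)$. The two inequalities for the case $A'$ strictly to the left of $A$ are obtained by the symmetric argument, with the roles of ``left'' and ``right'' (and of $A$ and $A'$) swapped throughout.

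I do not expect any step to be a real obstacle: the argument is essentially a clean induction on the number of points being added, and the only bookkeeping required is checking at each iteration that (a) the new point is strictly to the correct side of the current set, which is ensured by the chosen ordering of first coordinates, and (b) a path from the current set to $B$ still exists in $b(\tau_1, \tau_2)$, which is ensured because the current set always contains either $A$ or $A'$, whose path existence is guaranteed by hypothesis.
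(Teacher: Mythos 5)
Your proposal is correct and follows essentially the same route as the paper: add the points of the second set one at a time in the order of their first coordinates, apply Lemma \ref{l31} (respectively its reversed ``strictly to the left'' form) at each step, and chain the inequalities, with path existence at each stage guaranteed because the growing set always contains $A$ or $A'$. The only cosmetic difference is that the paper obtains the second inequality by invoking the already-proved third inequality with the roles of $A$ and $A'$ interchanged, whereas you unroll that same chaining argument directly.
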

\begin{proof}{Corollary}{c3}
Assume $A'$ is strictly to the right of $A$. Then, let $\mathbf a'_1<\mathbf a'_2,\dots ,\mathbf a'_s$ be the points of $A'$ and let $A'_j=\{\mathbf a'_1,\dots,\mathbf a'_j\}$. 
It then follows from Lemma \ref{l31} that
$$\mu_{b(\tau_1,\tau_2)}(A,B)\leq \mu_{b(\tau_1,\tau_2)}(A\cup A'_1,B)\leq \dots \leq \mu_{b(\tau_1,\tau_2)}(A\cup A'_s,B),$$
and the first inequality is proved. The third inequality is proved in the same way. Then using the third inequality interchanging the roles of $A$ and $A'$ we get
$$\mu_{b(\tau_1,\tau_2)}(A\cup A',B)\leq \mu_{b(\tau_1,\tau_2)}(A',B),$$
and together with the first inequality, this  implies the second inequality. The fourth inequality is proved following the same method.
×
\end{proof}

In the same way we also get the following two corollaries:
\begin{corollary}\label{c4}
 Assume the hypothesis of Lemma \ref{l31} and  let $B'$ be a finite subset of $L_n$. If $B'$ is strictly to the right of $B$, then
$$\mu_{b(\tau_1,\tau_2)}(A,B)\leq \mu_{b(\tau_1,\tau_2)}(A,B\cup B').$$
If in addition there exists a path in $b(\tau_1,\tau_2)$ from $A'$ to $B$ then,
$$\mu_{b(\tau_1,\tau_2)}(A,B)\leq \mu_{b(\tau_1,\tau_2)}( A,B').$$
If  $B'$ is strictly to the left of $B$, then
$$\mu_{b(\tau_1,\tau_2)}(A,B\cup B')\leq \mu_{b(\tau_1,\tau_2)}(A,B).$$
If in addition there exists a path in $b(\tau_1,\tau_2)$ from $A$ to $B'$ then
$$\mu_{b(\tau_1,\tau_2)}( A,B')\leq \mu_{b(\tau_1,\tau_2)}(A,B).$$
\end{corollary}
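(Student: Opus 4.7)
The plan is to mirror essentially verbatim the argument used for Corollary \ref{c3}, simply replacing the role of the starting set $A$ by the ending set $B$ and invoking part (ii) of Lemma \ref{l31} in place of part (i). Since Lemma \ref{l31}(ii) provides the exact one-point analogue for adding a point strictly to the right (or left) of $B$, an iteration over the points of $B'$ yields the first (resp.\ third) inequality, and the remaining two inequalities follow by swapping roles, exactly as in Corollary \ref{c3}.

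More concretely, first assume $B'$ is strictly to the right of $B$. Enumerate the points of $B'$ in increasing order of first coordinate as $\mathbf b'_1<\mathbf b'_2<\cdots<\mathbf b'_s$ and set $B'_j=\{\mathbf b'_1,\dots,\mathbf b'_j\}$. Each $\mathbf b'_{j+1}$ is then strictly to the right of $B\cup B'_j$, so a direct application of Lemma \ref{l31}(ii) to the set $B\cup B'_j$ gives
$$\mu_{b(\tau_1,\tau_2)}(A,B\cup B'_j)\leq \mu_{b(\tau_1,\tau_2)}(A,B\cup B'_{j+1}).$$
Chaining these $s$ inequalities together yields the first inequality of the corollary. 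The third inequality is obtained in the same way, working through the points of $B'$ from right to left and using the ``reversed'' version of Lemma \ref{l31}(ii).

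To obtain the second inequality, swap the roles of $B$ and $B'$ in the third inequality (note that now $B$ plays the role of a set strictly to the right of $B'$), which gives
$$\mu_{b(\tau_1,\tau_2)}(A,B\cup B')\leq \mu_{b(\tau_1,\tau_2)}(A,B').$$
Combining this with the first inequality yields $\mu_{b(\tau_1,\tau_2)}(A,B)\leq \mu_{b(\tau_1,\tau_2)}(A,B')$, which is the second inequality; the existence hypothesis on paths from $A$ to $B'$ is needed precisely to ensure that $\mu_{b(\tau_1,\tau_2)}(A,B')$ is defined. The fourth inequality is derived by the same swapping trick applied to the first inequality.

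There is no genuine obstacle here: once Lemma \ref{l31}(ii) is in hand the argument is purely combinatorial bookkeeping of a telescoping chain, entirely parallel to the proof of Corollary \ref{c3}. The only small point to verify at each step is that the hypotheses of Lemma \ref{l31} continue to hold for the enlarged set $B\cup B'_j$, i.e.\ that there exists a path in $b(\tau_1,\tau_2)$ from $A$ to $B\cup B'_j$; but this is immediate from the existence of a path from $A$ to $B$ in $b(\tau_1,\tau_2)$, which we assume throughout.
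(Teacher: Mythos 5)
Your proposal is correct and follows essentially the same route as the paper, which proves Corollary \ref{c4} exactly as Corollary \ref{c3}: iterate the one-point statement of Lemma \ref{l31}(ii) over the points of $B'$, chain the resulting inequalities, and obtain the second and fourth inequalities by interchanging the roles of $B$ and $B'$. Your remarks on where the extra path-existence hypothesis enters (and that the hypotheses of Lemma \ref{l31} persist for the enlarged sets) match the intended argument.
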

 \begin{corollary}\label{c5}
  Corollaries \ref{c3} and \ref{c4} also hold if in their statements we substitute distributions of rightmost open paths 
for distributions of leftmost open paths (i.e. the inequalities also hold if we write $\nu$ instead of $\mu$). 
 \end{corollary}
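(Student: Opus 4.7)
My plan is to repeat, word for word, the arguments of Corollaries \ref{c3} and \ref{c4}, with $\nu$ replacing $\mu$ everywhere. Those proofs consist of iterating the single-point insertion inequalities of Lemma \ref{l31} along an enumeration of the points of the additional set $A'$ (respectively $B'$), together with a role-reversal step that upgrades the set-inclusion comparison to a set-replacement comparison. The iteration is purely combinatorial: it does not distinguish between leftmost and rightmost paths. Since Lemma \ref{l31} is stated symmetrically for $\mu$ and $\nu$ in both parts (i) and (ii), quoting the $\nu$-versions of those inequalities drives the same iteration to the $\nu$-analogues of Corollaries \ref{c3} and \ref{c4}.

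To spell out one of the four cases, suppose $A' = \{\mathbf a'_1 < \mathbf a'_2 < \dots < \mathbf a'_s\}$ is strictly to the right of $A$. Setting $A'_j = \{\mathbf a'_1, \dots, \mathbf a'_j\}$ and applying the $\nu$-form of Lemma \ref{l31}(i) successively produces
\[
\nu_{b(\tau_1,\tau_2)}(A,B) \leq \nu_{b(\tau_1,\tau_2)}(A \cup A'_1,B) \leq \dots \leq \nu_{b(\tau_1,\tau_2)}(A \cup A',B),
\]
which gives the first inequality of the $\nu$-analogue of Corollary \ref{c3}. Interchanging the roles of $A$ and $A'$ and using the ``strictly to the left'' direction of the same lemma yields $\nu_{b(\tau_1,\tau_2)}(A \cup A',B) \leq \nu_{b(\tau_1,\tau_2)}(A',B)$, and chaining these two inequalities gives the second. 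The remaining cases of Corollary \ref{c3} and all four cases of Corollary \ref{c4} follow in the same manner by iterating the appropriate part of Lemma \ref{l31}.

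The only substantive item to verify is that the $\nu$-inequalities of Lemma \ref{l31} themselves hold, since the written proof of that lemma only treats the first $\mu$-inequality in detail. I expect this to be routine bookkeeping rather than a real obstacle: Proposition \ref{p1} is already stated for both $\mu$ and $\nu$ and so applies in the $\nu$ case without modification; the conditioning on the cluster $\Phi$ emanating from $A$ and the construction of $\tau_1(\varphi)$ via Lemma \ref{l1} do not depend on the choice of extremal path; and the identities \eqref{e38}--\eqref{e39} simply swap roles — on $F_A^c \cap F_{\mathbf a}^c$ the rightmost open path from $A \cup \{\mathbf a\}$ to $B$ coincides with the rightmost open path from $\mathbf a$ to $B$ (rather than from $A$) — after which the convex-combination argument transcribes without further change. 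This direction-swap is the one place where care is needed; beyond it, the transcription is entirely mechanical.
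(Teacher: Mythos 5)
Your top-level argument --- iterate the single-point inequalities of Lemma \ref{l31}, with $\nu$ in place of $\mu$, along an enumeration of $A'$ (or $B'$), plus the role-reversal step --- is exactly how the paper obtains Corollary \ref{c5}, and if you simply invoke the $\nu$-statements that Lemma \ref{l31} already asserts, that part is fine. The gap is in the step you yourself single out as the substantive one: your sketch of why the $\nu$-form of Lemma \ref{l31} holds. You claim that \eqref{e38}--\eqref{e39} ``swap roles'' and that the convex-combination argument then transcribes without further change. It does not. Work in the strip $b(\tau_1,\tau_2)$ with $\mathbf a$ strictly to the right of $A$, and keep the paper's notation $F_A$, $F_{\mathbf a}$. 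The swapped decomposition you describe splits $\nu_{b(\tau_1,\tau_2)}(A\cup\{\mathbf a\},B)$ according to $F_{\mathbf a}^c$ versus $F_{\mathbf a}$: on $F_{\mathbf a}^c$ the rightmost path from $A\cup\{\mathbf a\}$ is indeed the rightmost path from $\mathbf a$, giving the piece $\nu_{b(\tau_1,\tau_2)}(\mathbf a,B)$, but on $F_{\mathbf a}\cap F_A^c$ it is the rightmost path from $A$, giving the piece $\nu^{F_{\mathbf a}}_{b(\tau_1,\tau_2)}(A,B)$. For the transcribed argument you would need $\nu^{F_{\mathbf a}}_{b(\tau_1,\tau_2)}(A,B)\geq \nu_{b(\tau_1,\tau_2)}(A,B)$; but the very conditioning-on-clusters argument that yields \eqref{e34} (partition $F_{\mathbf a}$ by the set of edges in open paths out of $\mathbf a$; on each atom every open path from $A$ to $B$ is strictly to the left of that set; apply the $\nu$-part of Proposition \ref{p1} via Lemma \ref{l1}) gives the \emph{opposite} inequality $\nu^{F_{\mathbf a}}_{b(\tau_1,\tau_2)}(A,B)\leq \nu_{b(\tau_1,\tau_2)}(A,B)$, and it is strict in simple examples: learning that there is no open path from the right-hand point $\mathbf a$ pushes the rightmost path from $A$ stochastically to the \emph{left}. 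So one piece of your convex combination lies below $\nu_{b(\tau_1,\tau_2)}(A,B)$ and the piecewise-domination argument breaks down; the same asymmetry occurs in the ``reversed'' $\mu$-cases.

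The repair is a genuine (if small) structural change rather than a mechanical swap: keep the partition $F_A$ versus $F_A^c$, and replace the equality \eqref{e38} by a domination. On $F_A^c$, every open path from $A$ to $B$ is in particular a path from $A\cup\{\mathbf a\}$ to $B$, so the rightmost open path from $A\cup\{\mathbf a\}$ to $B$ is pointwise to the right of the rightmost open path from $A$ to $B$; hence $\nu^{F_A^c}_{b(\tau_1,\tau_2)}(A\cup\{\mathbf a\},B)\geq \nu^{F_A^c}_{b(\tau_1,\tau_2)}(A,B)=\nu_{b(\tau_1,\tau_2)}(A,B)$. On $F_A$, the rightmost path from $A\cup\{\mathbf a\}$ equals the rightmost path from $\mathbf a$, and the $\nu$-analogues of \eqref{e32} and \eqref{e37} (these do transcribe, using the $\nu$-part of Proposition \ref{p1} and the max-of-two-paths version of the pointwise comparison behind \eqref{e36}) give $\nu^{F_A}_{b(\tau_1,\tau_2)}(\mathbf a,B)\geq \nu_{b(\tau_1,\tau_2)}(\mathbf a,B)\geq \nu_{b(\tau_1,\tau_2)}(A,B)$. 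The convex combination of these two pieces then dominates $\nu_{b(\tau_1,\tau_2)}(A,B)$, which is the $\nu$-form of Lemma \ref{l31}; with that secured, your iteration does give Corollary \ref{c5} exactly as in the paper.
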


\begin{proof}{Proposition}{p1}.
 Without loss of generality, we  assume that $m=0$ and  
proceed by
 induction on $n$, the inductive statement being that the four inequalities 
 hold under their respective hypotheses. 
For $n=1$ the result is trivial. For the inductive step, we wish to prove that the four inequalities hold under their respective hypotheses for a specific value of $n \geq 2$, given the inductive hypothesis, which is that the four inequalities all hold under their respective hypotheses for all smaller values of $n \geq 1$.   

We will prove that 
\begin{equation}\label{e0}
 \mu_{b(\tau_1,\tau_3)}(A,B)\leq \mu_{b(\tau_1,\tau_2)}(A,B)
\end{equation}
where $A$ and $B$ are subsets of $L_0$ and $L_n$ respectively and $\tau_1,\tau_2,\tau_3$ are paths from $L_0$ to $L_n$ satisfying the appropriate hypotheses, including the hypothesis that $A$ is strictly to the left of $\tau_3$. 
We omit the proofs of the other inequalities because
they follow the same ideas. Let  $\mathbf a_0,\dots,\mathbf a_n$  be the successive vertices
of $\tau_3$ and for $i=0,\dots,n$, let
\[
\Lambda_i = \ell(\{\mathbf a_0,\dots,\mathbf a_i\}) \cap b(\tau_1,\tau_2) \, .
\]
This is the set of vertices that are strictly between $\tau_1$ and $\tau_2$ and also strictly to the left of the first $i+1$ vertices of $\tau_3$.  

Of course, $\Lambda_n = b(\tau_1,\tau_3)$, so $\mu_{b(\tau_1,\tau_3)}(A,B) = \mu_{\Lambda_n}(A,B)$. 
Since we have assumed that $A$ is strictly to the left of $\tau_3$, we also have $\mu_{\Lambda_0}(A,B)=\mu_{b(\tau_1,\tau_2)}(A,B)$.  Therefore, the result will follow from:
\begin{equation}\label{e1}
 \mu_{\Lambda_{i+1}}(A,B)\leq \mu_{\Lambda_{i}}(A,B)\ \ i=0,\dots,n-1.
\end{equation}
We fix an $i\in \{0,\dots, n-1\}$ and note that $\mu_{\Lambda_{i+1}}(A,B)= \mu_{\Lambda_{i}}(A,B)$ if the paths from $A$ to $B$ in $\Lambda_{i+1}$ and in $\Lambda_{i}$ are the same. These sets of paths are the same if there is no path from $A$ to $B$ that goes through $\mathbf a_{i+1}$, or if $\mathbf a_{i+1}= \mathbf a_i + (1,1)$.  Thus, in proving \eqref{e1}, we may assume that there is at least one path from $A$ to $B$ going through $\mathbf a_{i+1}$ and that
$\mathbf a_{i+1} = \mathbf a_i + (-1,1)$.

We start proving \eqref{e1} for $i\in \{0,\dots, n-2\}$. Later, we will show it also holds for $i=n-1$.
Let 
\begin{equation}
F=\{\mbox{there is no open path from } A \mbox{ to } B \mbox{ in } \Lambda_{i+1}\}.
\end{equation}
We now show that 
\begin{equation}\label{e2}
 \mu_{\Lambda_{i}}(A,\mathbf a_{i+1},B)\leq \mu^F_ {\Lambda_{i}}(A,\mathbf a_{i+1},B) \, .
\end{equation}
In words, this says that  the leftmost open path from $A$ to $B$ contained in $\Lambda_{i}$ and passing through $\mathbf a_{i+1}$ is stochastically to the left of the leftmost open path 
from $A$ to $B$ contained in $\Lambda_{i}$ and passing through $\mathbf a_{i+1}$ given that there is no open  path from $A$ to $B$ in $\Lambda_{i+1}$.
To prove \eqref{e2}  we will show that  $\mu_{\Lambda_{i}}(A,\mathbf a_{i+1},B)$ and  $\mu_{\Lambda_{i}}^F(A,\mathbf a_{i+1},B)$ satisfy the 
hypothesis of Lemma \ref{l2}.
Clearly 
$$\mu_{\Lambda_{i}}(A,\mathbf a_{i+1},B)=\mu_{\Lambda_{i}}(A,\mathbf a_{i+1})\times \mu_{\Lambda_{i}}(\mathbf a_{i+1},B)$$
Hence, to show \eqref{e2} it suffices to show that the measures of this inequality satisfy the hypotheses i) and ii) of Lemma \ref{l2} . 

Let $\Phi$ be the random set of edges
 that belong to open paths in $\Lambda_i$ starting from points in  $A$ which   do not go through $\mathbf a_{i+1}$ (but may have $\mathbf a_{i+1}$ as their endpoint). 
Then the event $F$ can be expressed as the union of disjoint events of the form
$\{\Phi=\varphi\}$ where $\varphi$ ranges over some deterministic subsets of edges. We call $\Upsilon $ the collection of those subsets:
$F=\cup_{\varphi \in \Upsilon}\{\Phi=\varphi \}$. Fix a path $\gamma_1$ from $A$ to $\mathbf a_{i+1}$ in $\Lambda_i$ and  let $\Upsilon_{\gamma_1}$ be the collection of sets in 
$\Upsilon$ which contain all the edges of  $\gamma_1$ and contain no other path from $A$ to $\mathbf a_{i+1}$ in $\Lambda_i$ to the left of $\gamma_1$. Note that for any $\varphi \in \Upsilon_{\gamma_1}$, on the event $\{\Phi=\varphi\}$ any open path from
$\mathbf a_{i+1}$ to $B$ must be strictly to the right of $\varphi$.
Now, $ \mu_{\Lambda_i}^F(A,\mathbf a_{i+1},B)(\bullet \vert \gamma_1)$ is a convex combination of measures of the form 
$ \mu_{\Lambda_i}^{\{\Phi=\varphi\}}(A,\mathbf a_{i+1},B)_2 $ where $\varphi$ 
ranges over $\Upsilon_{\gamma_1}$.
But for each $\varphi$, the corresponding measure is equal to $\mu_{{\Lambda_i}\cap r(\varphi)}(\mathbf a_{i+1},B)$ i.e. the distribution of the leftmost open path from $\mathbf a_{i+1}$ to $B$ in $\Lambda_i$
 which is strictly to the right of $\varphi$.  In making this assertion, we are using the fact that given $\{\Phi = \varphi \}$, the conditional distribution of the openness of the bonds that emerge from sites that are strictly to the right of $\varphi$ is the same as the unconditional distribution.  This consequence of the independence in the model is similar to the one we used in the proof of Lemme~\ref{l31} when we proved \eqref{e31}. We note for future reference (see Section~\ref{sec5}) that both here and in the proof of Lemma~\ref{l31}, we do not need independence for two bonds that emerge from the same site; we only need it for bonds that emerge from different sites.

 By the inductive hypothesis, and Lemma \ref{l1} we have
 $$\mu_{{\Lambda_i}\cap r(\varphi)}(\mathbf a_{i+1},B)\geq \mu_{\Lambda_i}(\mathbf a_{i+1},B),$$
 for any $\varphi \in \Upsilon_{\gamma_1}$. 
Hence 
$$ \mu_{\Lambda_i}^F(A,\mathbf a_{i+1},B)(\bullet \vert \gamma_1)\geq \mu_{\Lambda_i}(\mathbf a_{i+1},B)=$$
$$\mu_{\Lambda_i}(A,\mathbf a_{i+1},B)_2.$$
Since this holds for all $\gamma_1$, it implies that 
$$\mu_{\Lambda_i}^F(A,\mathbf a_{i+1},B)_2 \geq \mu_{\Lambda_i}(A,\mathbf a_{i+1},B)_2.$$
Since $i\leq n-2$ we can use the inductive hypothesis in the same way 
to show that 
\begin{equation} \label{e2prime} \mu_{\Lambda_i}^F(A,\mathbf a_{i+1},B)_1 \geq \mu_{\Lambda_i}(A,\mathbf a_{i+1},B)_1.
\end{equation}
The one important difference in this argument is that we must partition the event $F$ according to open paths that end at the set $B$ instead of starting at $A$.  Now \eqref{e2} follows from Lemma \ref{l2}. 
Since $$\mu_{ \Lambda_i}^{F}(A,B)=\mu_{\Lambda_i}^F(A,\mathbf a_{i+1},B),$$

from \eqref{e2} we get:

$$ \mu_{ \Lambda_i}^{F}(A,B)\geq    \mu_{\Lambda_i}(A,\mathbf a_{i+1},B).$$
And since
$$\mu_{ \Lambda_i}^{F^c}(A,B)=\mu_{ \Lambda_{i+1}}(A,B),$$
\eqref{e1} will follow if we show that:
\begin{equation}\label{e2-1}
 \mu_{\Lambda_i}(A,\mathbf a_{i+1},B)\geq \mu_{ \Lambda_{i+1}}(A,B).
\end{equation}

To prove this, we let 
$$C_j=\{\mathbf a_{i+1}-(2j,0) ,\dots, \mathbf a_{i+1}-(2,0)\},$$
where $j$ ranges from $\underline j $ to $\bar j$,  where  $\underline j $ is the lowest value of $j$ for which $\Gamma_{\Lambda_{i+1}}(A,C_{\underline j},B)$ is nonempty and $\bar j$ is
the largest value of $j$ for which $C_j$ is strictly to the right of $\tau_1$. 
We will now prove by induction on $j$ that 
\begin{equation}\label{e2-2}
 \mu_{\Lambda_i}(A,\mathbf a_{i+1},B)\geq \mu_{ \Lambda_{i+1}}(A,C_{j},B),
\end{equation}
for all $\underline j \leq j \leq \bar j$.
Since
$\mu_{\Lambda_{i+1}}(A,B)=\mu_{ \Lambda_{i+1}}(A,C_{\bar j},B)$, \eqref{e2-1} will follow. 
For $j=\underline j$, first note that
$$\mu_{ \Lambda_{i+1}}(A,C_{\underline j},B)=\mu_{ \Lambda_{i+1}}(A,\mathbf a_{i+1} - (2 \underline j ,0))\times \mu_{ \Lambda_{i+1}}(\mathbf a_{i+1} - (2\underline j,0),B)$$
and 
$$\mu_{ \Lambda_{i}}(A,\mathbf a_{i+1},B)=\mu_{ \Lambda_{i}}(A,\mathbf a_{i+1})\times 
\mu_{ \Lambda_{i}}(\mathbf a_{i+1},B).$$
Since
$$\mu_{ \Lambda_{i+1}}(A,\mathbf a_{i+1} - (2 \underline j ,0))= \mu_{ \Lambda_{i}}(A, \mathbf a_{i+1} - (2  \underline j ,0)),$$
from Corollary \ref{c4} and the inductive hypothesis on $n$ we get:
$$\mu_{ \Lambda_{i+1}}(A,\mathbf a_{i+1} - (2 \underline j ,0)))\leq \mu_{ \Lambda_{i}}(A,\mathbf a_{i+1}).$$
Similarly, since
$$\mu_{ \Lambda_{i+1}}(\mathbf a_{i+1} - (2  \underline j ,0),B)= \mu_{ \Lambda_{i}}( \mathbf a_{i+1} - (2 \underline j ,0),B),$$
from Corollary \ref{c3} and the same inductive hypothesis we get: 
$$\mu_{ \Lambda_{i+1}}(\mathbf a_{i+1} - (2  \underline j ,0),B)\leq \mu_{ \Lambda_{i}}(\mathbf a_{i+1},B).$$
Therefore for $j= \underline j $, \eqref{e2-2} follows from  Lemma \ref{l2}.
To prove the inductive step, we  assume that \eqref{e2-2} holds for some $\underline j <j< \bar j$ . We also assume that $\Gamma_{\Lambda_{i+1}}(A,{\bf a_{i+1}}-(2(j+1),0),B)$ is nonempty
 since otherwise the inductive step is trivial. Then, we define the event:

$$F_j=\{\mbox{there is no open path from } A \mbox{ to } B \mbox{ in } \Lambda_{i+1} \mbox{ going through }C_j\}.$$
Since 
$$ \mu^{F_j^c}_{ \Lambda_{i+1}}(A,C_{j+1},B)\leq \mu_{ \Lambda_{i+1}}(A,C_{j},B)$$
and 
$\mu_{ \Lambda_{i+1}}(A,C_{j+1},B)$ is a convex combination of $\mu^{F_j^c}_{ \Lambda_{i+1}}(A,C_{j+1},B)$
and $\mu^{F_j}_{ \Lambda_{i+1}}(A,C_{j+1},B)$, 
$$ \mu_{ \Lambda_{i+1}}(A,C_{j+1},B)\leq \mu_{ \Lambda_{i}}(A,\mathbf a_{i+1},B)$$
will follow from the inductive hypothesis (in $j$) and
\begin{equation}\label{e2-3}
  \mu^{F_j}_{ \Lambda_{i+1}}(A,C_{j+1},B)\leq \mu_{\Lambda_i}(A,\mathbf a_{i+1},B).
\end{equation}
To prove \eqref{e2-3} we start noting that 
$$\mu^{F_j}_{ \Lambda_{i+1}}(A,C_{j+1},B)=\mu^{F_j}_{ \Lambda_{i+1}}(A,\mathbf a_{i+1}-(2(j+1),0),B). $$
We also claim that
 $$\mu^{F_j}_{ \Lambda_{i+1}}(A,\mathbf a_{i+1}-(2(j+1),0),B)\leq \mu_{ \Lambda_{i+1}}(A,\mathbf a_{i+1}-(2(j+1),0),B).$$
 This is proved using the same argument that was used for \eqref{e2}, except that we use a different part of the inductive hypothesis, namely, instead of using $\mu_{b(\tau_1,\tau_2)}(A,B) \leq \mu_{b(\tau_3,\tau_2)}(A,B)$, we now use $\mu_{b(\tau_1,\tau_3)}(A,B) \leq \mu_{b(\tau_1,\tau_2)}(A,B)$. 
Hence \eqref{e2-3} will follow from
\begin{equation}\label{e2-4}
  \mu_{ \Lambda_{i+1}}(A,\mathbf a_{i+1}-(2(j+1),0),B)\leq  \mu_{\Lambda_i}(A,\mathbf a_{i+1},B).
\end{equation}
But this last inequality can be proved in the same way 
we proved \eqref{e2-2} for $j=1$.
This completes the proof of \eqref{e1} for $i=0,\dots,n-2$. 

We now show that \eqref{e1} also holds for $i=n-1$.  Unlike what we did for $i<n-1$ we cannot use the inductive hypothesis to prove \eqref{e2} (see the step there that involves \eqref{e2prime} where we needed $i \leq n-2$). 
We may assume that $\mathbf a_n \in B$ and that $\Gamma_{\Lambda_{n-1}}(A,{\mathbf a_{n-1}}-(2,0,),{\bf a_n})$ is nonempty since otherwise \eqref{e1} is trivial for $i = n-1$.  
We let 
$$F=\{\mbox{there is no open path from } A \mbox{ to } B \mbox{ in } \Lambda_{n}\},$$
and note that
$\mu^{F^c}_{ \Lambda_{n-1}}(A,B)=\mu_{\Lambda_n}(A,B)$ and $$ \mu^{F}_{ \Lambda_{n-1}}(A,B)= \mu^F_{ \Lambda_{n-1}}(A,\mathbf a_{n-1}-(2,0),\mathbf a_n)\, .$$ Therefore \eqref{e1} for 
 $i=n-1$ will follow from:
\begin{equation}\label{e2-5}
\mu^F_{ \Lambda_{n-1}}(A,\mathbf a_{n-1}-(2,0),\mathbf a_n)\geq \mu_{ \Lambda_{n}}(A,B) \, .
\end{equation}
This inequality will follow from the following two inequalities:
\begin{equation}\label{e2-7}
 \mu^F_{ \Lambda_{n-1}}(A,\mathbf a_{n-1}-(2,0),\mathbf a_n)\geq  \mu_{ \Lambda_{n-1}}(A,\mathbf a_{n-1}-(2,0),\mathbf a_n),
\end{equation}
and 
\begin{equation}\label{e2-8}
 \mu_{ \Lambda_{n-1}}(A,\mathbf a_{n-1}-(2,0),\mathbf a_n)\geq  \mu_{ \Lambda_{n}}(A,B) \, .
\end{equation}
To prove \eqref{e2-7}, 
note that 
 \[
 \mu_{ \Lambda_{n-1}}(A,\mathbf a_{n-1}-(2,0),\mathbf a_n) = \mu_{\Lambda_{n-1}}(A,\mathbf a_{n-1} - (2,0)) \times 
 \mu(\mathbf a_{n-1}-(2,0),\mathbf a_n) \, ,
 \]
and since the event that there is an open path from $\mathbf a_{n-1} - (2,0)$ to $\mathbf a_n$ is independent of $F$, we also have
\[
 \mu^F_{ \Lambda_{n-1}}(A,\mathbf a_{n-1}-(2,0),\mathbf a_n) = \mu^F_{\Lambda_{n-1}}(A,\mathbf a_{n-1} - (2,0)) \times 
 \mu(\mathbf a_{n-1}-(2,0),\mathbf a_n) \, .
  \]
So by Lemma~\ref{l2}, \eqref{e2-7} is equivalent to the inequality
\[
\mu^F_{\Lambda_{n-1}}(A,\mathbf a_{n-1} - (2,0)) \geq \mu_{\Lambda_{n-1}}(A,\mathbf a_{n-1} - (2,0)) \, ,
\]
which follows from the inductive hypothesis and the argument that was used to prove the similar inequality \eqref{e2prime}.

It remains to prove \eqref{e2-8}.  This proof is very similar to the proof of \eqref{e2-1}.  Namely, we define the sets

$$C_j=\{\mathbf a_{n-1}-(2j,0),\dots, \mathbf a_{n-1}-(2,0)\},$$
where $j$ ranges from $\underline j $ to $\bar j$,  where  $\underline j $ is the lowest value of $j$ for which $\Gamma_{\Lambda_{n}}(A,C_{\underline j},B)$ is nonempty and $\bar j$ is
the largest value of $j$ for which $C_j$ is strictly to the right of $\tau_1$. 

Then $\mu_{\Lambda_n}(A,B) = \mu_{\Lambda_n}(A,C_{\bar j},B)$, so it is enough to prove by induction on $j$ that
\begin{equation} \label{eq99} \mu_{ \Lambda_{n-1}}(A,\mathbf a_{n-1}-(2,0),\mathbf a_n)\geq \mu_{\Lambda_n}(A,C_j,B)\end{equation} for each $j = \underline j,\dots,\bar j$. The case $j=\underline j$ is handled just as before, using the inductive hypothesis,
Lemma~\ref{l2} and Corollaries \ref{c3} and \ref{c4}.  The proof for the remaining values of $j$ is similar to but simpler than that part of the argument in the proof of \eqref{e2-1}.  It is simpler because the second marginal of the measure on the left of \eqref{eq99} is deterministic.  It is slightly different because the point $a_{n-1}-(2,0)$ is an element of the set $\Lambda_n$, but this difference does not cause any difficulties.

 \end{proof}

\bigskip

Now that Proposition~\ref{p1} is proved, we can use the corollaries to Lemma~\ref{l31} to improve the proposition by removing the restriction on the set $A$:
 \begin{corollary}\label{cp1}
Modify the hypotheses of Proposition~\ref{p1} as follows: remove  the assumption that the set $A$ strictly lie to the right of $\tau_3$  in the first part, and the assumption that $A$ lie strictly to the left of $\tau_3$ in the second part.  Then under this modified hypothesis, the conclusions of Proposition~\ref{p1} remain valid.
 \end{corollary}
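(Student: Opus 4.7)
The plan is to reduce the strengthened statement to Proposition~\ref{p1} itself by splitting $A$ according to its position relative to $\tau_3$ and using the corollaries to Lemma~\ref{l31} to absorb the portion of $A$ that lies on the ``wrong'' side.

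For the first modified inequality, I will set $A_R = A \cap r(\tau_3)$ and $A' = A \setminus A_R$. Since any path contained in $b(\tau_3,\tau_2)$ must start at a point strictly to the right of $\tau_3$, the hypothesis $\Gamma_{b(\tau_3,\tau_2)}(A,B)\neq\emptyset$ forces $A_R$ to be nonempty and gives the identity $\mu_{b(\tau_3,\tau_2)}(A,B)=\mu_{b(\tau_3,\tau_2)}(A_R,B)$. At level $m$, the first coordinates of $A'$ are all at most $\tau_3(m)$ while those of $A_R$ are all strictly greater than $\tau_3(m)$, so $A'$ is strictly to the left of $A_R$ in the sense of Section~\ref{sec:intro}. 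Corollary~\ref{c3} (in its ``strictly-to-the-left'' form, applied with the roles of $A$ and $A'$ played by $A_R$ and $A\setminus A_R$) then produces
\[
\mu_{b(\tau_1,\tau_2)}(A,B) \;=\; \mu_{b(\tau_1,\tau_2)}(A_R\cup A',B) \;\leq\; \mu_{b(\tau_1,\tau_2)}(A_R,B).
\]
Because $A_R$ is strictly to the right of $\tau_3$, Proposition~\ref{p1} applies to $A_R$ directly and yields $\mu_{b(\tau_1,\tau_2)}(A_R,B)\leq\mu_{b(\tau_3,\tau_2)}(A_R,B)$. Chaining the two inequalities gives the modified conclusion for $\mu$; the $\nu$-statement is identical, invoking Corollary~\ref{c5} in place of Corollary~\ref{c3}.

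The second modified inequality is symmetric. I set $A_L = A \cap \ell(\tau_3)$ and $A'' = A\setminus A_L$; the nonemptiness of $\Gamma_{b(\tau_1,\tau_3)}(A,B)$ forces $A_L$ to be nonempty and gives $\mu_{b(\tau_1,\tau_3)}(A,B)=\mu_{b(\tau_1,\tau_3)}(A_L,B)$. Since $A''$ is strictly to the right of $A_L$ by the same level-$m$ computation, Corollary~\ref{c3} (in its ``strictly-to-the-right'' form) gives $\mu_{b(\tau_1,\tau_2)}(A_L,B)\leq\mu_{b(\tau_1,\tau_2)}(A,B)$, and Proposition~\ref{p1} applied to $A_L$ (which is strictly to the left of $\tau_3$) gives $\mu_{b(\tau_1,\tau_3)}(A_L,B)\leq\mu_{b(\tau_1,\tau_2)}(A_L,B)$. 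Combining them yields the desired inequality; again the $\nu$-version uses Corollary~\ref{c5}.

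I do not expect any real obstacle: once the splitting $A=A_L\cup A_0\cup A_R$ (with $A_0 = A\cap\tau_3$) is made, the entire argument is bookkeeping. The only subtle point to verify is that the presence of a point of $A$ lying exactly on $\tau_3$ at level $m$ does not spoil the ``strictly left'' or ``strictly right'' conditions of Corollary~\ref{c3}: such a point has first coordinate $\tau_3(m)$, which is strictly less than every first coordinate appearing in $A_R$ and strictly greater than every first coordinate appearing in $A_L$, so grouping it into $A'$ (resp.\ $A''$) still respects the required strict inequality.
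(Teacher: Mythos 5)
Your proof is correct and follows essentially the same route as the paper: split $A$ according to its position relative to $\tau_3$, note that the measure on the smaller strip only sees the part of $A$ on the relevant side, apply Proposition~\ref{p1} to that part, and then restore the remaining points via Corollary~\ref{c3} (and Corollary~\ref{c5} for $\nu$). The paper writes this out only for the $b(\tau_1,\tau_3)$ versus $b(\tau_1,\tau_2)$ case with the decomposition $A_1=\{\mathbf a\in A:\mathbf a \mbox{ strictly left of }\tau_3\}$, $A_2=A\setminus A_1$, exactly mirroring your argument.
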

 \begin{proof}{Corollary}{cp1}
We will show that $\mu_{b(\tau_1,\tau_3)}(A,B) \leq \mu_{b(\tau_1,\tau_2)}(A,B)$ without the assumption that $A$ is strictly to the left of $\tau_3$.  The proof of the other case is similar.  Let 
$$A_1=\{\mathbf a\in A: \mathbf a\mbox{ is strictly to the left of }\tau_3\} $$ and
$$A_2=A\setminus A_1.$$
Then,
$$ \mu_{b(\tau_1,\tau_3)}(A,B)=\mu_{b(\tau_1,\tau_3)}(A_1,B)\leq$$
$$\mu_{b(\tau_1,\tau_2)}(A_1,B)\leq \mu_{b(\tau_1,\tau_2)}(A_1\cup A_2,B)=$$
$$\mu_{b(\tau_1,\tau_2)}(A,B),$$
where the first inequality follows from Proposition~\ref{p1} and the second inequality 
follows from Corollary \ref{c3}.
 \end{proof}

\bigskip

We now proceed to prove Theorem \ref{t1}. We only prove the first of the four inequalities stated since the proof of each of the other three is similar.

\begin{proof}{Theorem}{t1}
Suppose $G$ is such that there exists at least one path from $A$ to $B$ in $\ell(G)$. Then $P_1(G)$ is bounded below. Now, let $\tau_G$ be the path provided by 
Lemma \ref{l1}. Then, 
$$\mu_{\ell(G)}(A,B)=\mu_{b(-\infty,\tau_G)}(A,B)\leq \mu_{b(-\infty,\infty)}(A,B)=\mu(A,B),$$
where the inequality follows from Corollary \ref{cp1}.
\end{proof}

\section{Alternative proof of Theorem \ref{t1}}
In this section we give another proof of Theorem  \ref{t1}, based on a Markov Chain introduced in \cite{BHK}. Assume  $A\subset L_0$ , $B\subset L_n$  and $E$ is a set of oriented edges in $\Lambda$ containing at least one path from $A$ to $B$. Now, let $S=\{0,1\}^E$. Each element $\eta$ of $S$ determines the state of the edges in $E$ in the natural way: $e\in E $ is open (closed )for $\eta$ if $\eta(e)=1$ ($\eta(e)=0$).  Now, we let $T$ be the subset of $S$ consisting of the elements for which there is an open path from $A$ to $B$. For $\eta \in T$, we let $\gamma_{\ell}(\eta)$ ($\gamma_{r}(\eta)$) be the leftmost (rightmost) open path from $A$ to $B$ under configuration $\eta$. We let $\sigma(S) $ be the 
$\sigma$-algebra of all subsets of $S$ and we let $P_p$ be the product  probability measure on $\sigma(S)$  whose marginals are Bernoulli with parameter $p$.  On the probability space $(S,\sigma(S),P_p)$,  we define for each $e\in E$ a random variable $X_e$ by means of $X_e(\eta)=\eta(e)$.

On the event $T$ we define $\Gamma_{\ell}$ ($\Gamma_r$) as the leftmost (rightmost ) open path from $A$ to $B$.
For a path $\gamma$ from $A$ to $B$ we let $\sigma_r(\gamma)$
be the $\sigma$-algebra  generated by  $\{X_e: e\in \gamma \mbox { or }  e\mbox{ is to the right of }\gamma\}$ and
we let $\sigma'_r(\gamma)$  be the $\sigma$-algebra  generated by  $\{X_e:   e\notin \gamma, e\mbox{ is to the right of }\gamma\}$ . Similarly we let $\sigma_{\ell}(\gamma)$
be the $\sigma$-algebra  generated by  $\{X_e: e\in \gamma \mbox { or }  e\mbox{ is to the left of }\gamma\}$ 
 and
we let $\sigma'_{\ell}(\gamma)$  be the $\sigma$-algebra  generated by  $\{X_e:   e\notin \gamma, e\mbox{ is to the left of }\gamma\}$.
We now note that the event $\{\Gamma_{\ell}=\gamma \}$ is $\sigma_{\ell}(\gamma)$-measurable. Therefore under the conditional
measure $P_p(\bullet\vert \Gamma_{\ell}=\gamma)$ the distribution of the state of the bonds which are strictly to the right of
$\gamma$ remains a Bernoulli  product measure of parameter $p$. Similarly,   under the conditional
measure $P_p(\bullet\vert \Gamma_r=\gamma)$ the distribution of the state of the bonds which are strictly to the left of
$\gamma$ remains a Bernoulli  product measure of parameter $p$. 
We now define a Markov Chain on T. Its  transition mechanism is given in two steps. For a given initial state $\eta_0$,
first we choose $\eta_{1/2}\in T$ by letting $\eta_{1/2}(e)=\eta_{0}(e)$ for all $e$ to the left of  $\gamma_{\ell}(\eta_0)$ or on 
$\gamma_{\ell}(\eta_0)$
and for the other elements of $E$ we let $\eta_{1/2}(e)$ be independent Bernoulli random variables with parameter $p$.
Once we have determined 
$\eta_{1/2}$ we let $\eta_1(e)=\eta_{1/2}(e)$ for all $e$  to the right of  $\gamma_r(\eta_{1/2})$ or on  $\gamma_r(\eta_{1/2})$
and for the other elements of $E$ we let $\eta_{1}(e)$ be independent Bernoulli random variables with parameter $p$ which are also 
independent of the random variables used to determine $\eta_{1/2}$. In the sequel we will need to consider this
Markov Chain for different sets $E$. We will call it the Markov Chain associated to $E$.  
In the sequel, we extend the notation of the previous sections by letting 
$\Gamma_E(A,B)$ be the set of paths from $A$ to $B$ whose edges are in $E$.


\begin{proposition}\label{p3}(van den Berg, H\"aggstr\"om, Kahn)
 The measure $P_p(\bullet \vert T)$ is invariant for the Markov chain.
\end{proposition}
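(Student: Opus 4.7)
The plan is to prove invariance by treating each of the two half-steps of the chain separately. The essential observation, already recorded in the paper's setup, is that for each path $\gamma$ from $A$ to $B$ the event $\{\Gamma_\ell = \gamma\}$ is $\sigma_\ell(\gamma)$-measurable. Since this event is contained in $T$, it follows that under $P_p(\bullet \vert T)$, conditional on $\Gamma_\ell = \gamma$, the states of the edges strictly to the right of $\gamma$ remain iid Bernoulli($p$) and are independent of everything on or to the left of $\gamma$. The symmetric statement holds with $\ell$ replaced by $r$.

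For the first half-step, suppose $\eta_0 \sim P_p(\bullet \vert T)$, and condition on $\Gamma_\ell(\eta_0) = \gamma$. By construction, $\eta_{1/2}$ equals $\eta_0$ on $\gamma$ and strictly to its left, while on edges strictly to the right of $\gamma$ it is freshly sampled as iid Bernoulli($p$), independently of the conditioning. On the other hand, by the observation above, the conditional law of $\eta_0$ itself given $\Gamma_\ell(\eta_0) = \gamma$ has exactly the same form: some $\sigma_\ell(\gamma)$-measurable law on the edges on and to the left of $\gamma$, together with an independent Bernoulli($p$) product measure on the edges strictly to the right. Hence the conditional distributions of $\eta_{1/2}$ and of $\eta_0$ given $\Gamma_\ell(\eta_0) = \gamma$ coincide, and averaging over $\gamma$ with weights $P_p(\Gamma_\ell = \gamma \vert T)$ gives $\eta_{1/2} \sim P_p(\bullet \vert T)$. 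One checks along the way that $\eta_{1/2} \in T$ automatically, since all edges of $\gamma_\ell(\eta_0)$ remain open.

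The second half-step is handled by the identical argument with $(\Gamma_\ell, \sigma_\ell, \text{left})$ replaced by $(\Gamma_r, \sigma_r, \text{right})$ throughout, using that $\{\Gamma_r = \gamma\}$ is $\sigma_r(\gamma)$-measurable and that, conditional on this event under $P_p(\bullet \vert T)$, the edges strictly to the left of $\gamma$ are iid Bernoulli($p$) and independent of the rest. Composing the two invariance statements, $P_p(\bullet \vert T)$ is preserved by a full step of the chain.

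The only delicate point is the bookkeeping: one must carefully identify, at each half-step, which sub-$\sigma$-algebra is copied from $\eta_0$ (resp.\ $\eta_{1/2}$) and which is resampled, and then verify that the resampling rule is matched by the conditional independence structure that $P_p(\bullet \vert T)$ already enjoys with respect to $\{\Gamma_\ell = \gamma\}$ (resp.\ $\{\Gamma_r = \gamma\}$). Once the measurability assertions about these events are in hand, the argument reduces to comparing two descriptions of the same product Bernoulli law on the complementary region, so no genuine obstacle arises.
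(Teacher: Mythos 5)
Your proposal is correct and follows essentially the same route as the paper: decompose $P_p(\bullet \vert T)$ according to the value of $\Gamma_\ell$ (resp.\ $\Gamma_r$), use the $\sigma_\ell(\gamma)$-measurability of $\{\Gamma_\ell = \gamma\}$ to conclude that the edges strictly to the right of $\gamma$ are conditionally iid Bernoulli($p$), and observe that the half-step resampling therefore leaves each conditional law, hence the mixture, unchanged. No substantive difference from the paper's argument.
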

\begin{proof}{Proposition}{p3}
We show that if the initial state of the chain $\eta_0$ is chosen according to the distribution $P_p(\bullet \vert T)$,
then $\eta_{1/2}$ has the same distribution. A similar argument will then show that $\eta_1$ has the same distribution as
 $\eta_{1/2}$. 
Let $\gamma$ be an arbitrary path in $\Gamma_E(A,B)$ and let
$$S_{\gamma}=\{\eta\in T: \gamma \mbox{ is the leftmost open path from }A \mbox{ to }B$$
$$ \mbox{ under configuration }s\} .$$
Then $(S_{\gamma}: \gamma \in \Gamma_E(A,B)$ is a partition of $T$ and  $P_p(\bullet \vert T)$ is a convex combination of
the measures $(P_p(\bullet \vert S_{\gamma}))_{\gamma \in \Gamma_E(A,B)}$.
Therefore, it suffices to show that if $\eta_0$ is distributed according to some $P_p(\bullet \vert S_{\gamma})$ then 
$\eta_{1/2}$ is also distributed according to that measure. But this is an immediate consequence of the way we obtain $\eta_{1/2}$ from $\eta_0$
and the already observed fact that the under the conditional
measure $P_p(\bullet\vert \Gamma_{\ell}=\gamma)$ the distribution of the state of the bonds which are strictly to the right of
$\gamma$ remains a Bernoulli  product measure of parameter $p$..

\end{proof}

Since the Markov Chain associated to
 $E$ is obviously irreducible and aperiodic we deduce from this proposition the following:
 
 \begin{corollary}\label{c2}
 From any initial distribution, the Markov Chain associated to $E$ converges to  $P_p(\bullet \vert T)$
  \end{corollary}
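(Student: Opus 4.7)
The plan is to apply the standard convergence theorem for irreducible, aperiodic, finite-state Markov chains. Since $E$ is finite, the state space $T \subset \{0,1\}^E$ is finite, and Proposition~\ref{p3} has already identified $P_p(\bullet\vert T)$ as an invariant measure. It therefore suffices to verify irreducibility and aperiodicity on $T$; convergence from every initial distribution and uniqueness of the invariant measure then follow automatically.

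For aperiodicity, I would show that every state has a self-loop of positive probability. With positive probability the edges resampled in the first half-step (those strictly to the right of $\gamma_\ell(\eta_0)$) happen to take exactly the values they had in $\eta_0$, so $\eta_{1/2}=\eta_0$; the same reasoning applied to the second half-step yields $\eta_1=\eta_0$. Since $p\in(0,1)$, each required Bernoulli outcome has probability at least $\min(p,1-p)>0$, so the joint event has positive probability. For irreducibility, I would show that every pair of states in $T$ can be connected by routing through the all-open configuration $\eta_{\mathrm{open}}$ in which every edge of $E$ is open (an element of $T$ because $\Gamma_E(A,B)$ is nonempty by hypothesis). To go from an arbitrary $\eta_0$ to $\eta_{\mathrm{open}}$: in the first half-step, with positive probability every resampled edge becomes open, so $\eta_{1/2}$ is open on all edges strictly to the right of $\gamma_\ell(\eta_0)$; then $\gamma_r(\eta_{1/2})$ is pushed as far right as possible, and in the second half-step the remaining resampled edges become open with positive probability, producing $\eta_{\mathrm{open}}$. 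A symmetric argument reaches any prescribed $\eta'\in T$ from $\eta_{\mathrm{open}}$ in one further step by tuning the resampling outcomes to agree with $\eta'$.

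The main obstacle is the bookkeeping for irreducibility: the preservation regions in each half-step (on or to the left of $\gamma_\ell$ in the first, on or to the right of $\gamma_r$ in the second) depend on the current configuration, so one must choose the intermediate states with care so that the preserved edges are compatible with the eventual target. Routing through $\eta_{\mathrm{open}}$ is what makes the argument clean, because once we commit to the all-open state we only need to verify that each required Bernoulli sampling event has positive probability, and that is immediate from $p\in(0,1)$. No deeper idea is involved; in particular, the statement does not require any of the sophisticated machinery developed in the earlier sections.
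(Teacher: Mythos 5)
Your proposal is correct and takes essentially the same route as the paper: Proposition \ref{p3} supplies invariance of $P_p(\bullet \vert T)$, and convergence from any initial distribution follows from finiteness of $T$ together with irreducibility and aperiodicity, which the paper simply declares obvious while you spell them out. Your verification is sound — the self-loop argument gives aperiodicity, and routing through the all-open configuration gives irreducibility, with the only point needing care being the one you flag: in the return step from the all-open state to a target $\eta'$ one should choose the first-half resampling to agree with $\eta'$ strictly to the right of $\gamma_r(\eta')$ and check that the intermediate configuration's rightmost open path is exactly $\gamma_r(\eta')$, so that the edges preserved in the second half-step already match $\eta'$.
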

\begin{proof}{Theorem}{t1}
As before, we only prove the first inequality, since the other proofs are similar.
Let $E$ be the set of edges belonging to paths in $\Gamma(A,B)$  and let $E_{\ell(G)}$ be the set of edges belonging to paths in $\Gamma_{\ell(G)}(A,B)$. We let  $T_1$ be the subset of elements of $\{0,1\}^E$ for which there exists an open path from 
$A$ to $B$ and we $T_2$ be the subset of elements of 
 $\{0,1\}^{E_{\ell(G)}}$ for which there exits an an open path from 
$A$ to $B$. We now construct a Markov Chain in 
$$X=\{(\eta,\xi)\in T_1\times T_2: \gamma_{\ell}(\eta) \geq \gamma_{\ell}(\xi),  \gamma_{r}(\eta) \geq \gamma_{r}(\xi)\}$$
 whose first and second marginals are as the Markov Chains associated to $E$ and to $E_{\ell(G)}$ respectively.  This is done as follows: 
Assume $(\eta_0\xi_0)\in X$, then  let $\{Y_e: e\mbox{ strictly to the right of } \gamma_{\ell}(\xi)\}$ be a collection of i.i.d Bernoulli random variables of parameter $p$. First note by definition of $X$,
 $\gamma_{\ell}(\eta) \geq \gamma_{\ell}(\xi)$, then define $\eta_{1/2}$ and $\xi_{1/2}$ as follows:\newline
$\eta_{1/2}(e)= \eta_0(e)$ for all $e$ on $\gamma_{\ell}(\eta_0)$ or to the left of $\gamma_{\ell}(\eta_0)$,  \newline
$\eta_{1/2}(e)=Y_e$ for all $e$ strictly to the right of $\gamma_{\ell}(\eta_0)$, \newline
$\xi_{1/2}(e)= \xi_0(e)$ for all $e$ on $\gamma_{\ell}(\xi_0)$ or to the left of $\gamma_{\ell}(\xi_0)$,  \newline
$\xi_{1/2}(e)=Y_e$ for all $e$ strictly to the right of $\gamma_{\ell}(\xi_0)$. \newline
After that note that  $(\eta_{1/2},\xi_{1/2})\in X$ and let $ \{Z_e: e\mbox{ strictly to the left of } \gamma_{r}(\eta)\}$ be
 collection of i.i.d Bernoulli random variables of parameter $p$ which is independent of the random variables $X_e$.
 Finally, define $\eta_{1}$ and $\xi_{1}$ as follows:\newline
 $\eta_{1}(e)= \eta_{1/2}(e)$ for all $e$ on $\gamma_{r}(\eta_{1/2})$ or to the right of $\gamma_{r}(\eta_{1/2})$,  \newline
$\eta_{1}(e)=Z_e$ for all $e$ strictly to the left of $\gamma_{r}(\eta_{1/2})$, \newline
$\xi_{1}(e)= \xi_{1/2}(e)$ for all $e$ on $\gamma_{r}(\xi_{1/2})$ or to the right of $\gamma_{r}(\xi_{1/2})$,  \newline
$\xi_{1}(e)=Z_e$ for all $e$ strictly to the left of $\gamma_{r}(\xi_{1/2})$. \newline
We can now complete our proof: let $\Phi$ be a bounded  increasing function on $\Gamma(A,B)$ and let $(\eta_0,\xi_0)$
be an element of $X$. Then $(\eta_n,\xi_n)\in X$ a.s. $\forall n$. Therefore, 
$$\Phi(\gamma_{\ell }(\eta_n))\geq \Phi(\gamma_{\ell }(\xi_n)) a.s.  \ \  \forall n.$$ Hence,
$$E(\Phi(\gamma_{\ell }(\eta_n)))\geq E(\Phi(\gamma_{\ell }(\xi_n)))\ \ \forall n,$$
and applying Corollary \ref{c2} to both sides of the inequality we get $\mu(A,B)\geq \mu_{\ell(G)}(A,B)$.

\end{proof}

\section{Generalizations and extensions of Theorem \ref{t1}} \label{sec5}
We first discuss generalizations of the oriented bond percolation model that do not require any change in our proofs of the main results, except for one case in which the Markov chain proof does not seem to work.  Then we consider oriented site percolation and the contact process.

It is easy to check that we never made any use of the assumption that all of the bonds have the same probability of being open.  In fact, we could assign a different probability to each bond, and the proofs will continue to work without any changes.  It may seem that this is an uninteresting generalization, but we will see that it turns out to be relevant when we use percolation models to approximate the continuous time contact process.

Another easy generalization involves the assumption of independence between bonds.  Nowhere in our original proofs of the main result do we need the openness of two bonds to be independent if those two bonds emerge from the same site.  That is, if $(x,y)$ is a site in $\Lambda$, then the events that the two bonds that connect $(x,y)$ to $(x\pm 1,y+1)$ are open can be correlated arbitrarily.  Independence is only needed for bonds that emerge from different sites. See the comment that is found in the proof of Proposition~\ref{p1}, at the end of the paragraph where the random set $\Phi$ is defined. This is one situation where the Markov chain proof does not work quite so well; it does not seem to be valid in the case of negative correlations.  

We now turn to oriented site percolation.  Let 
\[
\overline{\Lambda} = \{(x,y): x,y \in \Z, y \geq 0\} \, .
\]
Fix integers $a \leq 0 < b$, and for each $(x,y) \in \overline{\Lambda}$, introduce oriented bonds from $(x,y)$ to $(x+k,y+1)$ for $a \leq k \leq b$.  
All of the bonds are open.  The sites are open independently of each other, with probability $p \in (0,1)$ (we could also allow different probabilities for different sites).  Paths are defined in the obvious way, and open paths are paths in which all of the sites are open.  

The case where $a=0$ and $ b=1$ is equivalent to the standard oriented site percolation in $\Z^2$, where there are two bonds per site.  Not surprisingly, our proofs of the main result can be modified in a routine way to cover this case.  The only real difference involves the places in the proofs of Lemma~\ref{l31} and Proposition~\ref{p1} where a random set $\Phi$ is defined.  In the case of oriented bond percolation, $\Phi$ was defined in each proof to be a certain random set of open edges belonging to paths that either started in some set $A$ or ended in a set $B$. For the oriented site percolation model, it is best to define $\Phi$ to be an analogous random set of open sites, and then enlarging $\Phi$ to include all of the (typically closed) sites that are at the ends of bonds that are connected to sites in $\Phi$.  Then the proofs can be continued as before, looking at open paths that are strictly to the right of the enlarged version of $\Phi$.  

Once the case $a=0$, $ b=1$ is handled, it is quite routine to further modify the proof to cover arbitrary $a \leq 0 < b$, which is to say that our main result holds for finite range oriented site percolation in 2 dimensions.  This highlights a significant difference between oriented bond percolation and oriented site percolation.  In oriented bond percolation, the leftmost and rightmost paths may not even exist when there is the possibility that bonds cross one another, as will be the case when $b-a > 1$.  But in oriented site percolation, leftmost and rightmost paths always exist, for any choice of $a,b$.  

Finally, we briefly discuss the contact process.  By treating the $y$-coordinate in $\Lambda$ as the time variable, one can obtain various versions of the discrete time contact process from oriented percolation.  The standard model is equivalent to oriented site percolation.  Variations on this model can be obtained from oriented bond percolation, and also by looking at mixed models in which both the sites and the bonds can be open or closed.  Not surprisingly, our main results applies to many mixed percolation models, and hence to many different discrete time contact processes.

One way to extend our results to the continuous time contact process is to approximate continuous time with discrete time. This method works easiest for the one-sided nearest neighbor contact process.  Then we could use the oriented bond percolation model that is the setting for most of this paper, but it is perhaps more natural to do oriented bond percolation on an equivalent graph: the set of sites is $\overline{\Lambda}$ and the oriented bonds are those that correspond to $a=0$ and $b=1$.  That is, two oriented bonds emerge from each site $(x,y)$, a ``vertical'' bond connecting it to the site
$(x,y+1)$ and a ``contact'' bond connecting it to $(x+1,y+1)$.  

With this setup, we can approximate the continuous time one-sided contact process by letting  the contact bonds be open with small probability $h > 0$ and letting the   vertical bonds be open with probability $1- \varepsilon h$, where $\varepsilon \geq 0$ is a parameter of the model.  Then letting $h \downarrow 0$ and rescaling time by a factor of $h$ produces the continuous-time model.  This shows why it can be desirable to allow the bonds to have different probabilities of being open.

One can use a similar approximation method for the two-sided nearest neighbor contact process.  In this case, we use the graph $\Lambda$ that is the setting for the bulk of this paper, but we add additional oriented ``vertical bonds'' that connect each site $(x,y) \in \Lambda$ to the site $(x,y+2)$. Then by assigning appropriate probabilities to the bonds (different for the vertical bonds than for the diagonal bonds), one obtains a percolation model that depends on a parameter $h$, and this model converges to the two-sided nearest neighbor contact process as $h\to 0$.
See  \cite{sued} for further details.  The bottom line is that the results in this paper all apply to one the one-sided and two-sided contact processes in continuous time.

\section{Applications of Theorem \ref{t1}} \label{applications}
One reason for our interest in extreme paths is that they provide us with a useful way to analyze various conditional probabilities, and with the help of Theorem \ref{t1}, we are able to make comparisons that go beyond the usual correlation inequalities that are familiar in percolation theory. The results in this section apply to the more general models discussed in the previous section, except that Corollary~\ref{corl2} requires translation invariance, so that all of the bond (or site) probabilities must be the same.

We will rely on a key fact about extreme paths.  It is that if $\gamma$ is a path, the event that $\gamma$ is a rightmost (leftmost) extremal path is measurable with respect to the states of the bonds that are to the right (left) of $\gamma$ and hence this event is independent of the states of the bonds that are strictly to the left (right) of $\gamma$. (For more details, see the discussion and definitions at the beginning of the previous section.)

The following result and its proof show how we use this fact in conjunction with Theorem \ref{t1} to compare several different conditional probabilities.

\begin{lemma} \label{lembasic}
Let $n > 0$, let $A$ be a finite subset of $L_0$ and let $B_1,B_2,B_3 $ be finite subsets of $L_n$.  Suppose that $B_1$ is strictly to the left of $B_2$ and that $B_2$ is strictly to the left of $B_3$.  For $i=1,2,3$, define the events
\[
H_i = \{\mbox{there exists an open path from $A$ to $B_i$}\}
\]
If the events $H_2$ and $H_3$ have nonzero probability, then
\begin{equation} \label{basicineq}
P(H_1 \mid H_2 \cap H_3^c) \geq P(H_1 \mid H_2)\geq P(H_1 \mid H_2 \cap H_3 )  
\end{equation}
and
\begin{equation} \label{basicineq2}
P(H_1 \mid H_2) \geq P(H_1 \mid H_3) \, .
\end{equation}
Furthermore, if $A$ consists of a single site $(x,0)$, then $P(H_1 \mid H_2)$ is nonincreasing in $x$ for all $x$ such that the event $H_2$ has positive probability.  
\end{lemma}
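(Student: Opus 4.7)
The plan is to condition on the leftmost open path $\gamma_{\ell}^{(2)}$ from $A$ to $B_2$, which on the event $H_2$ has some distribution $\mu^{(2)}$. The key fact about extreme paths recalled at the start of this section tells us that the event $\{\gamma_{\ell}^{(2)}=\gamma\}$ is measurable with respect to the bonds weakly left of $\gamma$, so bonds strictly right of $\gamma$ are conditionally iid Bernoulli. Using the pointwise min/max construction of paths (well-defined in this model because two crossing oriented paths must share a vertex, as exploited implicitly in Section~3), I would establish the crucial structural fact: given $\gamma_{\ell}^{(2)}=\gamma$, the event $H_1$ is determined by the bonds weakly left of $\gamma$ --- because the pointwise minimum of any open path to $B_1$ with $\gamma$ is itself an open path to $B_1$ weakly left of $\gamma$ --- while the event $H_3$ is determined by the bonds weakly right of $\gamma$ --- because any open path $\pi$ to $B_3$ must be weakly right of $\gamma$, for otherwise $\pi\wedge\gamma$ would be an open path to $B_2$ strictly left of $\gamma$, contradicting that $\gamma$ is leftmost. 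Consequently, $H_1$ and $H_3$ are conditionally independent given $\gamma_{\ell}^{(2)}$.

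Writing $h_i(\gamma):=P(H_i\mid\gamma_{\ell}^{(2)}=\gamma)$, the two inequalities in \eqref{basicineq} reduce by the standard averaging decomposition to the single negative-correlation statement $E_{\mu^{(2)}}[h_1 h_3]\leq E_{\mu^{(2)}}[h_1]\,E_{\mu^{(2)}}[h_3]$, equivalent to $P(H_1\cap H_3\mid H_2)\leq P(H_1\mid H_2)P(H_3\mid H_2)$. I would prove this from three ingredients: (i) $h_3$ is an increasing function of $\gamma$ (shifting $\gamma$ rightward forces more bonds near $B_3$ open and enlarges the collection of open paths to $B_3$); (ii) $h_1$ is a decreasing function of $\gamma$ (rightward $\gamma$ shifts the forced-open bonds of $\gamma$ away from $B_1$ and, via the conditioning, imposes a stronger no-open-path-to-$B_2$-strictly-left-of-$\gamma$ restriction on exactly the region needed to reach $B_1$); and (iii) $\mu^{(2)}$ satisfies a positive correlation (FKG-type) inequality on the lattice of paths, so that decreasing and increasing functions of $\gamma$ are negatively correlated. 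Part \eqref{basicineq2} is handled analogously: Corollary~\ref{t2}, applied through the intermediate set $B_2\cup B_3$, shows that $\mu^{(3)}$ is stochastically to the right of $\mu^{(2)}$, and combined with the decreasingness of the relevant $h_1$-function, this yields $P(H_1\mid H_3)\leq P(H_1\mid H_2)$. Finally, for the monotonicity in $x$ when $A=\{(x,0)\}$, applying Corollary~\ref{t2} to $A\cup\{(x+2,0)\}$ gives $\mu^{(2)}$ shifts stochastically rightward as $x$ increases, so combining with the decreasing $h_1$ delivers the claim.

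The hard part will be establishing the FKG-type property of $\mu^{(2)}$ needed in step (iii), along with a rigorous verification of the monotonicities of $h_1$ and $h_3$. I expect these to follow from repeated applications of Theorem~\ref{t1} and Corollaries~\ref{c3}--\ref{c5} inside the conditional measure, in the spirit of the inductive arguments of Section~3; an attractive alternative would be to exploit the Markov chain of Section~4, which preserves the unconditional percolation measure on paths and admits a natural monotone coupling, and which could be coupled to itself to extract the required positive correlation inequality for $\mu^{(2)}$.
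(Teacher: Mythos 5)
Your structural observations are sound: conditioning on the leftmost open path $\gamma_{\ell}^{(2)}$ to $B_2$ does render $H_1$ and $H_3$ conditionally independent (by the min/max path argument), and both inequalities in \eqref{basicineq} do reduce to the single statement $P(H_1\cap H_3\mid H_2)\le P(H_1\mid H_2)P(H_3\mid H_2)$, i.e.\ to $E_{\mu^{(2)}}[h_1h_3]\le E_{\mu^{(2)}}[h_1]E_{\mu^{(2)}}[h_3]$. The genuine gap is step (iii): you need the law $\mu^{(2)}$ of the leftmost open path to be \emph{positively associated}, and nothing in the paper supplies this. Theorem~\ref{t1} and Corollaries~\ref{c3}--\ref{c5} are comparisons \emph{between two different} extreme-path laws (under different domain restrictions or different endpoint sets); an FKG inequality is a statement about correlations \emph{within a single} such law, and it does not follow from those comparisons by any routine manipulation. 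Verifying it directly (e.g.\ via the Holley/FKG lattice condition for the conditioned leftmost-path measure on the distributive lattice of paths) is a substantial separate project, and the Markov chain of Section~4 does not obviously deliver it either: that chain is used to couple chains on two different edge sets, not to produce positive association of a single stationary law. A secondary weak point is step (ii): under $\{\gamma_{\ell}^{(2)}=\gamma\}$ the bonds to the \emph{left} of $\gamma$ -- exactly the ones that decide $H_1$ -- are conditioned on the absence of an open path to $B_2$ strictly left of $\gamma$, so $h_1(\gamma)$ is a conditional probability under a nontrivially conditioned measure and its monotonicity in $\gamma$ is not clear.

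Both difficulties disappear if you condition on the \emph{rightmost} open path $\Gamma$ from $A$ to $B_2$ instead, which is what the paper does. Then $\{\Gamma=\gamma\}$ is measurable with respect to the bonds on or to the right of $\gamma$, so the bonds strictly to the left of $\gamma$ -- the only ones relevant to $H_1$, since $B_1$ is strictly to the left of $B_2$ -- remain i.i.d.\ and unconditioned; moreover $H_3$ is (on $\{\Gamma=\gamma\}$) determined by the bonds on or to the right of $\gamma$, so $P(H_1\mid H_2\cap\{\Gamma=\gamma\})=P(H_1\mid H_2\cap H_3^c\cap\{\Gamma=\gamma\})=\varphi(\gamma)$ for a single monotone function $\varphi$. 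The whole of \eqref{basicineq} then reduces to the stochastic comparison $\nu^{H_3^c}(A,B_2)\le\nu(A,B_2)$, which \emph{is} a comparison between two different path laws and is obtained from Theorem~\ref{t1} by decomposing $H_3^c$ according to the random set of edges lying on open paths into $B_3$. No positive-association property is needed anywhere. Your treatment of \eqref{basicineq2} and of the final monotonicity in $x$ via Corollary~\ref{t2} is essentially the paper's argument and is fine once it is run with rightmost rather than leftmost paths, for the same reason.
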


\begin{proof}{Lemma}{lembasic}
We begin by proving the following inequality:
\begin{equation} \label{con1}
\nu^{H_3^c}(A,B_2) \leq \nu(A,B_2) \, .
\end{equation}
The proof of \eqref{con1} is similar to arguments that we have made before. The event $H_3^c$ is the disjoint union of events of the form $\{\Phi = \varphi\}$, where $\Phi$ is the random set of all edges that are contained in open paths that end in $B_3$.  The left side of \eqref{con1} is a convex combination of the measures $\nu^{\{\Phi = \varphi\}}(A,B_2)$, and for each $\varphi$,  $\nu^{\{\Phi = \varphi\}}(A,B_2)=\nu_{\ell(G)}(A,B_2)$, where $G$ is the set that contains the endpoints of the edges in $\varphi$.  The inequality in \eqref{con1} now follows from Theorem \ref{t1}.

We now use \eqref{con1} to prove the first inequality in \eqref{basicineq}.  Let $\Gamma$ be the rightmost open path from $A$ to $B_2$, assuming that such a path exists, which is the same as assuming that $H_2$ occurs.
The left side of \eqref{con1} is the conditional distribution of $\Gamma$ given $H_2 \cap H_3^c$ and the right side of \eqref{con1} is the conditional distribution of $\Gamma$ given $H_2$.  In either case, the event $H_1$ occurs if and only if there is an open path from $A \cup \Gamma$ to $B_1$.

Let $\gamma$ be any path from $A$ to $B_1$.  Given the event $H_2 \cap \{\Gamma = \gamma\}$, the edges strictly to the left of $\gamma$ are each open with probability $p$ and they are independent of each other.  This statement about the edges left of $\gamma$ also holds true given the event $H_2 \cap H_3^c \cap \{\Gamma = \gamma \} $ because of the assumption that $H_3$ is strictly to the right of $H_2$.  In either case, whether or not there is an open path from $A \cup \gamma$ to $B_1$ is determined in the same way by the openness of the edges that are strictly to the left of $\gamma$.  Thus, there is a function $\varphi$ on the set of all paths $\gamma$ from $A$ to $B_2$ such that
\[
\varphi(\gamma) = P(H_1 \mid H_2 \cap \{\Gamma = \gamma\}) = P(H_1 \mid H_2 \cap H_3^c \cap \{\Gamma = \gamma \}) \, ,
\]
and we have
\[
E(\varphi(\Gamma) \mid H_2) = P(H_1 \mid H_2) \quad \mbox{and} \quad E(\varphi(\Gamma)\mid H_2 \cap H_3^c) = P(H_1 \mid H_2 \cap H_3^c) \, .
\]
Clearly, $\varphi$ is monotone with respect to the partial ordering on paths, so
The first inequality in \eqref{basicineq} now follows from \eqref{con1}.  The second inequality in \eqref{basicineq} follows immediately from the first inequality and the fact that the middle expression in \eqref{con1} is a convex combination of the first and third expressions.

To prove \eqref{basicineq2}, we note that repeated applications of Corollary \ref{t2} imply that
\[
\nu(A,B_2) \leq \nu(A,B_3) \, .
\]
(See the proof of Corollary \ref{c3} for a similar argument involving repeated applications of Corollary \ref{t2}.)
Now \eqref{basicineq2} follows, in the same way that \eqref{basicineq} followed from \eqref{con1}.

To prove the last part of the lemma, we note that by two applications of Corollary \ref{t2},
\[
\nu((x,0),B_2) \leq \nu((x+2,0),B_2)
\]
for all $x$ such that there exist paths from $(x,0)$ and $(x+2,m)$ to $B_2$.  The last part of the lemma is now proved in the same way that \eqref{basicineq} and \eqref{basicineq2} were proved.
\end{proof}

The first inequality in \eqref{basicineq} may seem counterintuitive.  We know that the occurence of a ``negative'' event like $H_e^c$ makes a ``positive'' event like $H_1$ less likely to occur.  But the first inequality says informally that once $H_2$ occurs, the additional occurrence of the negative event  $H_3^c$ makes $H_1$ {\em more likely} to occur. Here is another way to state this surprising result:

\begin{corollary} \label{corl1}
Let $H_1,H_2,H_3$ be as in Lemma \ref{lembasic}.  Then given $H_2$, the events $H_1$ and $H_3$ are conditionally negatively correlated.  
\end{corollary}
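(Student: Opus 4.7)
The plan is to derive the corollary directly as a restatement of the second inequality in \eqref{basicineq} from Lemma~\ref{lembasic}. Conditional negative correlation of $H_1$ and $H_3$ given $H_2$ is by definition the assertion
\[
P(H_1 \cap H_3 \mid H_2) \leq P(H_1 \mid H_2)\, P(H_3 \mid H_2),
\]
so the task reduces to rewriting this in a form that matches the lemma.

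First I would dispose of the trivial case $P(H_2 \cap H_3) = 0$: then the left-hand side is zero while the right-hand side is nonnegative, so the inequality holds automatically. In particular, the hypothesis of Lemma~\ref{lembasic} that $P(H_3) > 0$ can be assumed without loss of generality, since the corollary's statement is only meaningful when $P(H_2) > 0$, and nothing needs to be proved when $P(H_3) = 0$.

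Then, assuming $P(H_2 \cap H_3) > 0$, I would apply the multiplication rule
\[
P(H_1 \cap H_3 \mid H_2) = P(H_1 \mid H_2 \cap H_3)\, P(H_3 \mid H_2),
\]
and invoke the second inequality in \eqref{basicineq}, namely $P(H_1 \mid H_2 \cap H_3) \leq P(H_1 \mid H_2)$. Multiplying both sides by the nonnegative factor $P(H_3 \mid H_2)$ yields the desired inequality.

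There is no real obstacle here; the content of the corollary is precisely a repackaging of the (counterintuitive) second inequality of \eqref{basicineq}, which in turn is where all the work lies — that inequality was established using Theorem~\ref{t1} via the auxiliary inequality \eqref{con1} on the rightmost-path distribution. So the proof of the corollary itself is just a two-line manipulation of conditional probabilities.
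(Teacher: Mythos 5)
Your proposal is correct and matches the paper's own argument: both apply the multiplication rule $P(H_1 \cap H_3 \mid H_2) = P(H_1 \mid H_2 \cap H_3)\,P(H_3 \mid H_2)$ and then invoke the second inequality of \eqref{basicineq}. Your extra remark disposing of the degenerate case $P(H_2 \cap H_3)=0$ is a harmless refinement of the same two-line computation.
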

\begin{proof}{Corollary}{corl1}
The proof is elementary, using the second inequality in Lemma \ref{lembasic}:

$$
P(H_1 \cap H_3 \mid H_2) =  \frac{P(H_1 \cap H_2 \cap H_3)}{P(H_2)} = \frac{P(H_1 \cap H_2 \cap H_3)}{P(H_2 \cap H_3)}P(H_3 \mid H_2)$$ $$ = P(H_1 \mid H_2 \cap H_3)P(H_3 \mid H_2) \leq P(H_1 \mid H_2)P(H_3 \mid H_2) \, .$$
\end{proof}

Here is another application of Lemma \ref{lembasic}.  It is a rather natural monotonicity involving certain percolation probabilities. It is somewhat surprising that its proof seems to require the consequences of something as sophisticated as Theorem \ref{t1}. We note that this result clearly depends on some translation invariance, so it requires all of the bond probabilities (or site probabilities in the case of oriented site percolation) to be the same. A  different proof of this result is given in \cite{sued}, but we believe the one given here is more natural and easier to follow.

\begin{corollary} \label{corl2}
Let $0 \leq m < n$ and let $x,y$ be integers such that $(x,m)\in L_m$ and $(y,n)\in L_n$.  Let $A_{x,y}$ be the event that there is an open path from $(x,m)$ to $(y,n)$.  Then $P(A_{x,y})$ is nonincreasing in $|x-y|$.  
\end{corollary}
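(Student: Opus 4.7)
{Corollary}{corl2}
The plan is to reduce the desired unconditional monotonicity of $P(A_{x,y})$ in $|x-y|$ to the conditional monotonicity provided by the last statement of Lemma~\ref{lembasic}, using the translation and reflection symmetries of the model (both valid under the uniform bond-probability hypothesis in force in this section).

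By translation invariance, $P(A_{x,y})$ depends only on $y-x$ and on $n-m$; after reducing to $m=0$, set $f(d)\eqdef P(A_{0,d})$, so $P(A_{x,y})=f(y-x)$. The spatial reflection $x\mapsto -x$ preserves the law and interchanges $A_{0,d}$ with $A_{0,-d}$, giving $f(d)=f(-d)$. So it suffices to prove $f(d)\geq f(d')$ for $0 \leq d \leq d' \leq n$ with the common parity $d \equiv d' \equiv n \pmod 2$; the case $d'>n$ is trivial since then $f(d')=0$.

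To prove this, I would apply the last part of Lemma~\ref{lembasic} with $B_1=\{(d,n)\}$ and $B_2=\{(d',n)\}$, which gives that $x\mapsto P(A_{x,d}\mid A_{x,d'})$ is nonincreasing in $x$ wherever $A_{x,d'}$ has positive probability. The critical step is to compare this quantity at $x=0$ and at $x=d+d'$: the latter is the reflection of $0$ through the midpoint $(d+d')/2$ of the two endpoints on $L_n$. Composing the translation by $-(d+d')$ with $x\mapsto -x$ fixes the starting site $(0,0)$ while swapping the two endpoints $(d,n)\leftrightarrow(d',n)$, so this symmetry identifies $A_{d+d',d}$ with $A_{0,d'}$ and $A_{d+d',d'}$ with $A_{0,d}$ (both for marginal and for joint probabilities). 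Therefore $P(A_{d+d',d}\mid A_{d+d',d'}) = P(A_{0,d'}\mid A_{0,d})$, and the lemma's monotonicity yields
\[
P(A_{0,d}\mid A_{0,d'}) \geq P(A_{0,d'}\mid A_{0,d}).
\]
Canceling the common positive numerator $P(A_{0,d}\cap A_{0,d'})$ on both sides reduces this to $f(d) \geq f(d')$, as required.

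The only creative step is spotting that $x=d+d'$ is the right comparison point: it is precisely the reflection of the origin through the midpoint of the two endpoints, and this reflection is what swaps the roles of the ``target'' event and the ``conditioning'' event in Lemma~\ref{lembasic}, thereby converting its conditional monotonicity into the unconditional monotonicity of $f$. Everything else is routine symmetry and algebra.
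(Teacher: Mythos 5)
Your proposal is correct and follows essentially the paper's own route: both compare $P(A_{0,d}\mid A_{0,d'})$ with $P(A_{0,d'}\mid A_{0,d})$, rewrite the latter via reflection and translation as the same conditional probability started from $x=d+d'$ (the paper's $2y+2$ with $d'=d+2$), invoke the last part of Lemma~\ref{lembasic} to slide $x$ back to $0$, and cancel the common positive numerator. The only differences are cosmetic: you handle general $d\le d'$ in one stroke instead of the paper's induction in steps of two, and your phrase ``fixes the starting site $(0,0)$'' should instead say that the reflection through $(d+d')/2$ carries $(d+d',0)$ to $(0,0)$ while swapping the endpoints, which is exactly what your displayed identifications use.
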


\begin{proof}{Corollary}{corl2}
Because of the natural symmetries built into the percolation model, we may assume without loss of generality that $m=0$, $x=0$ and $y \geq 0$.   The obvious inductive argument reduces the proof to showing that
\begin{equation} \label{condineq1}
P(A_{0,y}) \geq P(A_{0,y+2}) \, ,
\end{equation} 
where we may assume that $y$ is such that there exists at least one path from $(0,0)$ to $(n,y+2)$.  Since $y \geq 0$, this assumption implies that there also exists at least one path from $(0,0)$ to $(n,y)$.  

Under these circumstances,
to prove \eqref{condineq1}, it is enough to prove that
\begin{equation} \label{condineq2}
P(A_{0,y} \mid A_{0,y+2}) \geq P(A_{0,y+2}\mid A_{0,y})
\end{equation}
since the numerators in the expressions for the two conditional probabilities in \eqref{condineq2} are the same and since the denominators in these expressions are the two sides of \eqref{condineq1} (in reverse order).

To prove \eqref{condineq2}, we first use left-right symmetry and then translation invariance to get
\[
P(A_{0,y+2} \mid A_{0,y}) = P(A_{0,-y-2} \mid A_{0,-y}) = P(A_{2y+2,\, y} \mid A_{2y+2,\, y+2}) \, .
\]
The last part of Lemma \ref{lembasic} implies that we make the right side of this equation no smaller if we replace $2y+2$ by $2y$, then by $2y-2$, then by $2y-4$, and so on.  Since we assumed that $y \geq 0$, this gives us
\[
P(A_{2y+2,\, y} \mid A_{2y+2,\, y+2}) \leq P(A_{0,y} \mid A_{0,y+2})
\]
proving \eqref{condineq2}, and thus \eqref{condineq1}.  Note that our assumptions about $y$ ensure that all of the relevant events  in these applications of Lemma \ref{lembasic} have positive probability, as required by the hypotheses of that lemma.
\end{proof}

\end{document}